\makeatletter \@addtoreset{equation}{section} \makeatother
\renewcommand\thetable{\thesection.\@arabic\c@table}
\theoremstyle{plain}
\newtheorem{maintheorem}{Theorem}
\newtheorem{maincorollary}{Corollary}
\newtheorem{theorem}{Theorem }[section]
\newtheorem{proposition}[theorem]{Proposition}
\newtheorem{lemma}[theorem]{Lemma}
\theoremstyle{definition} \theoremstyle{remark}
\newtheorem{remark}[theorem]{Remark}
\newtheorem{definition}[theorem]{Definition}
\newcommand{\al} {\alpha}
\newcommand{\vep}{\varepsilon}
\newcommand{\si} {\sigma}       \newcommand{\Si}{\Sigma}
\newcommand{\N}{\mathbb{N}}
\newcommand{\R}{\mathbb{R}}
\newcommand{\topp}{\operatorname{top}}
\newcommand{\Ptop}{P_{\topp}}
\newcommand{\cE}{\mathcal{E}}
\newcommand{\cM}{\mathcal{M}}
\newcommand{\cF}{\mathcal{F}}
\def\ds{\displaystyle}
\begin{document}

\title{Multifractal analysis of the irregular set for almost-additive sequences via large deviations}

\author{ Thiago Bomfim and Paulo Varandas}

\address{Thiago Bomfim, Departamento de Matem\'atica, Universidade Federal da Bahia\\
Av. Ademar de Barros s/n, 40170-110 Salvador, Brazil.}
\email{tbnunes@ufba.br}
\urladdr{https://sites.google.com/site/homepageofthiagobomfim/}

\address{Paulo Varandas, Departamento de Matem\'atica, Universidade Federal da Bahia\\
Av. Ademar de Barros s/n, 40170-110 Salvador, Brazil  \\ \&
Faculdade de Ci\^encias da Universidade do Porto, Rua do Campo Alegre 1021/1055, 4169--007 Porto,
Portugal.}
\email{paulo.varandas@ufba.br }
\urladdr{http://www.pgmat.ufba.br/varandas}

\date{\today}

\begin{abstract}
In this paper we introduce a notion of free energy and large deviations rate function for asymptotically
additive sequences of potentials via an approximation method by families of continuous potentials.
We provide estimates for the topological pressure of the set of points whose non-additive sequences
are far from the limit described through Kingman's sub-additive ergodic theorem and give some applications
in the context of Lyapunov exponents for diffeomorphisms and cocycles, and Shannon-McMillan-Breiman
theorem for Gibbs measures.
\end{abstract}

\subjclass[2000]{37A35, 37C30, 37C40, 37D25, 60F}
\keywords{Multifractal analysis, irregular sets, almost additive sequences, large deviations.}
%\tableofcontents

\maketitle

%%%%%%%%%%%%%%%%%%%%%%%%%%%%%%%%%%%%%%%%%%%%%%%%
\section{Introduction}

The study of the {\color{black}thermodynamic} formalism for maps with some hyperbolicity has drawn the attention
of many researchers from the theoretical physics and mathematics communities in the last decades. A particular topic of
interest in ergodic theory is to obtain limit theorems, the characterization of level sets, the velocity of convergence and to characterize
the set of points that do not converge, often called the irregular set. The general concept of multifractal analysis is to decompose
the phase space
in subsets of points which have a similar dynamical behavior and to describe the size of each of such subsets from the geometrical or
topological viewpoint. We refer the reader to the introduction of \cite{Olsen} and references therein for an excellent historical
account.
The study of the topological pressure or Hausdorff dimension of the level and the irregular sets can be
traced back to Besicovitch. Such a multifractal analysis program has been carried out successfully to deal with
self-similar measures~\cite{Olsen,LiWu,OlsenWinter},
Birkhoff averages~\cite{PW97,PW01, Daniel, special, CZ13, BV14},
Lyapunov spectrum~\cite{DK, BPS97, BG06, Ban,GR, Shu, Feng, T},
and in the case of simultaneous level sets~\cite{Climenhaga, Cli13},
and local entropies~\cite{TV99} just to quote some directions and contributions.
For additive sequences, level sets carry all ergodic information. In fact, by Birkhoff's ergodic theorem all ergodic measures give full weight to some level set. On the other hand, the irregular set may have full Hausdorff dimension or full topological pressure meaning that {\color{black}it can certainly not be omitted from} the topological or geometrical point of view (see e.g. \cite{Daniel}).
In particular the irregular set associated to Birkhoff sums for maps with some hyperbolicity has a rich multifractal
structure (see e.g.\cite{BV14}).
Due to the more recent developments of non-additive thermodynamic formalism, including~\cite{Barreira,BD09,Ba10,
Feng1,Feng2,IY, Mummert}, it is natural to ask whether there can exist a unified approach for the multifractal analysis
for certain classes of non-additive sequences of observables.

Here we aim to provide a multifractal analysis of the irregular set in the non-additive setting that we
now describe.
Fix $M$ a compact metric space and $f : M \rightarrow M$ a continuous dynamical system.
A sequence
$\Phi=\{\varphi_n\}\subset C(M, \R)^{\mathbb N} $  is a \emph{sub-additive}
 sequence of potentials if
$\varphi_{m+n}\le \varphi_{m}+\varphi_{n}\circ f^m$ for every
$m,n\ge 1$.
We say that the sequence  $\Phi=\{\varphi_n\}\subset C(M, \R)^{\mathbb N} $ is an
\emph{almost additive} sequence of potentials, if there exists a
uniform constant $C>0$ such that $\varphi_{m}+\varphi_{n}\circ
f^m-C \le \varphi_{m+n}\le \varphi_{m}+\varphi_{n}\circ f^m+C$ for
every $m,n\ge 1$. Finally, we say that $\Phi=\{\varphi_n\}\subset
C(M, \R)^{\mathbb N} $ is an \emph{asymptotically additive} sequence of potentials,
if for any $\xi>0$ there exists a continuous function
$\varphi_\xi$ such that
\begin{equation}\label{eq.asymptotic}
\limsup_{n\to\infty} \frac1n \left\| \varphi_n - S_n \varphi_\xi
\right\|_{\infty} <\xi
\end{equation}
where $S_n\varphi_\xi=\sum_{j=0}^{n-1}\varphi_\xi\circ f^j$
denotes the usual Birkhoff sum, and $||\cdot||_{\infty}$ is the sup norm on
the Banach space $C(M, \R)$.
It follows from the definition that if
if $\Phi=\{\varphi_n\}$ is almost additive then  there exists $C>0$ such that the sequence
$\Phi_C=\{\varphi_n+C\}$ is sub-additive.
Morever, if $\Phi=\{\varphi_n\}$ is almost additive then it is asymptotically additive (see e.g. \cite{ZZC11}).
By Kingman's subadditive ergodic theorem it follows that for every
sub-additive sequence $\Phi=\{\varphi_n\}$ and every
$f$-invariant ergodic probability measure $\mu$ so that $\varphi_1\in L^1(\mu)$ it holds
\begin{equation}\label{eq.Kingman}
\lim_{n\to\infty} \frac1n \varphi_n(x)
    =\inf_{n\geq 1} \frac1n \int \varphi_n \;d\mu=: \cF_*(\Phi,\mu),
    \quad \text{ for $\mu$-a.e. $x$}.
\end{equation}
The study of the multifractal spectrum associated to non-additive sequences of potentials arises naturally
in the study of Lyapunov exponents for non-conformal dynamical systems.
Feng and Huang~\cite{Feng} used subdiferentials
of pressure functions to characterize the topological pressure of the level sets
$$
\Big\{ x\in M : \lim_{n\to\infty} \frac1n \psi_n(x)=\alpha \Big\}
$$
for asymptotically sub-additive and asymptotically additive families $\Psi=\{\psi_n\}_n$.
Zhao, Zhang and Cao \cite{ZZC11} proved that
\textcolor{black}{if $f$ satisfies the specification property and $\Psi$ is any asymptotically additive sequence
of continuous potentials then
either the irregular set the $X(\{\psi_{n}\})$ (which consists of the points $x \in M$ such that the limit of
$\frac{1}{n}\psi_{n}(x)$ does not exist) is empty or carries full topological pressure for $f$ with respect to all asymptotically additive potential. This result proves that the irregular set often exhibits full topological complexity and provides the
starting point for a finer multifractal analysis description of  the irregular set that we address in this paper}.
We will be most interested in the analysis of the sets
$$
\overline{X}_{\mu,\Psi , c}
    := \Big\{ x \in M : \limsup_{n\to\infty} \Big| \frac{1}{n}\psi_{n}(x) - \mathcal{F}_{\ast}(\mu , \Psi) \Big| \geq c \Big\}
$$
and
$$
\underline{X}_{\mu,\Psi , c}
    := \Big\{ x \in M : \liminf_{n\to\infty} \Big| \frac{1}{n}\psi_{n}(x) - \mathcal{F}_{\ast}(\mu , \Psi) \Big| \geq c \Big\},
$$
where  $\Psi = \{\psi_{n}\}$ is an asymptotically additive or sub-additve sequence of observables, $c > 0$
and $\mu$ is an equilibrium state. More precisely, what are the properties and regularity of
the topological pressure functions $c\mapsto P_{\underline{X}_{\mu,\Psi , c}}(f,\Phi)$ and
$c\mapsto P_{\overline{X}_{\mu,\Psi , c}}(f,\Phi)$?
Such characterization and interesting applications for sequences $\Psi=\{\psi_n\}$ where $\psi_n=S_n\psi$
are Birkhoff sums were obtained in \cite{BV14}.

One of our purposes here is to characterize the sets $\overline{X}_{\mu,\Psi , c}$ and
$\underline{X}_{\mu,\Psi , c}$ thus extending the results from \cite{BV14} for almost additive sequences
of potentials, in which case a {\color{black}thermodynamic} formalism is available (see e.g \cite{Barreira, Mummert, BD09, Ba10}). One motivation is the study of Lyapunov exponents since beyond the one-dimensional and
conformal setting the situation is much less understood.

The strategy in this paper is to approximate averages of almost additive sequences by genuine Birkhoff averages of continuous functions. In the setting of subshifts of finite type we prove that almost additive sequence $\Psi$ are asymptotically additive
and that the sequences $\frac{\psi_n}n$ are uniformly approximated by Birkhoff means of sequences of potentials
can be chosen to have further regularity (c.f. Proposition~\ref{rmkasymp}) which,
in the case of uniformly expanding dynamics, we choose to be H\"older continuous. The key step is to prove
that the thermodynamical limiting objects that, we will detail below, do not depend on the approximating family.

We introduce a free energy function $\mathcal{E}_{f , \Phi,\Psi} (\cdot)$ and a rate function
$I_{f , \Phi,\Psi} (\cdot)$ obtained as limit of Legendre transforms that does not depend on the
family of approximations chosen and it is strictly convex in a neighborhood of $\cF_*(\Psi,\mu_\Phi)$
if and only if $\Psi$ is not cohomologous to a constant.
 This characterization using
the Legendre transform and the variational formulation for the large deviations rate function is enough to
obtain a functional analytic expression for the large deviations rate function obtained in \cite{Va13},
opening the way to study its continuous and differentiable dependence.
In the case of repellers, when the irregular set $X(\{\psi_n\})$ is nonempty then it carries full topological pressure.
We prove that $P_{\underline{X}_{\mu,\Psi , c}}(f,\Phi) \le P_{\overline{X}_{\mu,\Psi , c}}(f,\Phi)
<P_{\text{top}}(f,\Phi)$  for any positive $c>0$ meaning that the set
$X(\{\psi_n\})\cap \overline{X}_{\mu,\Psi , c}$ does not have full pressure. This means that irregular points
responsable for the topological pressure are those whose values are arbitrarily close to the mean.
In fact, in the case that $\Phi=0$ and $\mu_o$ denotes the maximal entropy measure
we give precise a characterization of the topological entropy of these sets in terms of the large
deviations rate function and deduce that   $\R^+_0 \ni c\mapsto h_{\underline{X}_{\mu_{0},\Psi,c}}
(f)=h_{\overline{X}_{\mu_{0},\Psi,c}}(f)$ is continuous, strictly decreasing and concave in a neighborhood of zero. (we refer to Section~\ref{Statement of the main results} for precise statements).

This paper is organized as follows. In Section~\ref{Statement of the main results} we introduce
the necessary definitions and notations and state our main results. Section~\ref{sec:free}
is devoted to the definition of these generalized notions of free energy and Legendre transforms
and to the proof of Theorem~\ref{legtrans}. Section~\ref{sec:frac} is devoted to the proof of the multifractal
analysis of irregular sets. Finally in Section~\ref{Examples}  we provide some examples and applications
of our results in the study of Lyapunov exponents for linear cocycles, non-conformal repellers and sequences
arising from Shannon-McMillan-Breiman theorem for entropy.

%%%%%%%%%%%%%%%%%%%%%%%%%%%%%%%%%%%%%%%%%%%%%%%%
\section{Statement of the main results}\label{Statement of the main results}

This section is devoted to the statement of the main results. Our first results concern the regularity of the
pressure function and the Legendre transform of the free energy function and its consequences to large
deviations.

%%%%%%%%%%%%%%%%%%%%%%%%%%%%
\subsubsection*{Topological pressure and equilibrium states}

Given an asymptotically additive sequence of potentials $\Phi = \{\phi_{n}\}$ and a arbitrary invariant
set $Z \subset M$ it can be defined the topological pressure $P_{Z}(f,\Phi)$ of $Z$ with respect to $f$ and $\Phi$
by means of a \textcolor{black}{Carath\'eodory} structure.
Let us mention that in the case that $\Phi=\{\phi_n\}$ with $\phi_n=S_n\phi$ for some continuous potential $\phi$
then $P_{Z}(f,\Phi)$ is exactly the usual notion of relative topological pressure for $f$ and $\phi$  on $Z$
introduced by Pesin and Pitskel. We refer the reader to \cite{pesin} for a complete account on \textcolor{black}{Carath\'eodory} structures.
Alternativelly, for asymptotically additive sequence of potentials the topological pressure can be
defined using the variational principle proved in \cite{Feng}
$$
P_{\topp}(f,\Phi) = \sup  \{ h_{\mu}(f) + \mathcal{F}_{\ast}(\Phi,\mu) : \mu \;\mbox{is an } \textcolor{black}{f\mbox{-invariant probability,}}\; \mathcal{F}_{\ast}(\Phi,\mu) \neq -\infty \}
$$
(see Subsection~\ref{sec:pressure} for more details.)
If an invariant probability measure $\mu_\Phi$ attains the supremum then we say that it is an \emph{equilibrium state} for $f$ with respect to $\Phi$. In this sense, equilibrium states are invariant measures
that reflect the topological complexity of the dynamical system. In many cases equilibrium states arise as (weak) Gibbs measures.
Given a sequence of functions $\Phi = \{\phi_{n}\}$, we say that a probability $\mu$ is a \emph{weak Gibbs measure} with respect to $\Phi$ on $\Lambda \subset M$ if there exists $\vep_0>0$ such that for every
$0<\vep<\vep_0$ there exists a positive sequence $(K_{n}(\vep))_{n \in \N}$ so that
$\lim \frac{1}{n}\log K_{n}(\vep) = 0$ such that for every $n\ge 1$ and  $\mu$-a.e. $x \in \Lambda$
$$
K_{n}(\vep)^{-1} \leq \frac{\mu(B(x,n,\vep))}{e^{-n P + \phi_{n}(x)}} \leq K_{n}(\vep).
$$
If, in addition, $K_{n}(\vep) = K(\vep)$ does not depend of $n$ we will say that $\mu$ is a \emph{Gibbs measure}.
Gibbs measures arise naturally in the context of hyperbolic dynamics: given a basic set $\Omega$ for
a diffeomorphism $f$ Axiom A (or $\Omega$ repeller to $f$) it is known that every almost additive potential $\Phi$ satisfying
\begin{equation}\label{equa5}
(\mbox{bounded distortion}) \; \exists A, \delta > 0 : \sup_{n \in \N}\gamma_{n}(\Phi,\delta) \leq A,
\end{equation}
where $\gamma_{n}(\Phi,\delta) := \sup\{|\phi_{n}(y) - \phi_{n}(z)| : y,z \in B(x,n,\delta) \},$ admits a unique equilibrium state $\mu_{\Phi}$ is a Gibbs measure with respect to $\Phi$ on $\Omega$
(see \cite{Barreira} and \cite{Mummert} for the proof). This concept in the additive context was introduced by
Bowen \cite{Bowen} to prove uniqueness of equilibrium states for expansive maps with the specification
property and it is weaker than the bounded distortion condition introduced by Walters
(cf. \cite{WaltersBD}). We will define now a weaker bounded distortion condition: we will say that a
sequence of continuous functions $\Phi = \{\varphi_{n}\}$ satisfies the \emph{weak Bowen condition} if
{\color{black}
\begin{equation}\label{eweakBowen}
\exists \delta > 0 \colon \lim_{n \to +\infty}\frac{\gamma_{n}(\Phi,\delta)}{n} = 0.
\end{equation}
In \cite{Va13}, Zhao and the second author obtained large deviations results for weak Gibbs measures and sub-additive observables with the weak Bowen condition.}
\textcolor{black}{We say the sequence $\Phi = \{\phi_{n}\}$  satisfies the \emph{tempered distortion condition} if
\begin{equation}\label{eq:tempered}
\lim_{\epsilon \to 0}\lim_{n \to +\infty}\frac{\gamma_{n}(\Psi , \epsilon)}{n} = 0.
\end{equation}
It is immediate from the definition that condition~\eqref{eq:tempered} is weaker than ~\eqref{eweakBowen}.
}

%%%%%%%%%%%%%%%%%%%%%%%%%%%%
\subsubsection*{Legendre transforms in the non-additive case}

In this section we will assume that $M$ is a Riemannian manifold, $f : M \rightarrow M$ is a $C^{1}$ map,
and $\Lambda \subset M$ is a isolated repeller such that $f\mid_\Lambda$ is topologically mixing.
{\color{black} Although we will restrict to the context of repellers for simplicity the results
on the thermodynamic formalism needed here also hold
for subshifts of finite type and, for that reason,
our results also are valid for subshifts of finite type.}
For any almost additive potential $\Phi$ satisfying the bounded distortion condition we know by \cite{Barreira} that there is a unique equilibrium state for $f$ with respect to $\Phi$, and we denote it by $\mu_{\Phi}$.
Later, Barreira proved also the differentiability of the pressure function.

\begin{proposition}\cite[Theorem 6.3]{Ba10} \label{nonbarr}
Let $f$ be a continuous map on a compact metric space and assume that $\mu \mapsto h_\mu(f)$
is upper semicontinuous. Assume that $\Phi$ and $\Psi$ are almost additive sequences satisfying the bounded distortion condition and that there exists a unique equilibrium state for the family
$\Phi +t \Psi$ for every $t\in \mathbb R$.
Then the function $\R \ni t \mapsto P_{\topp}(f, \Phi + t\Psi)$ is $C^{1}$
 and \textcolor{black}{$\frac{d}{dt} \Ptop(f, \Phi + t \Psi) = \mathcal{F}_{\ast}(\Psi , \mu_{\Phi + t\Psi})$}.
\end{proposition}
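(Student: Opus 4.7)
The plan is to use the variational principle to exhibit $t\mapsto\Ptop(f,\Phi+t\Psi)$ as a pointwise supremum of affine functions of $t$, read off differentiability at each point via a sandwich inequality coming from the unique equilibrium state, and finally upgrade to $C^1$ regularity by proving weak-$*$ continuity of the map $t\mapsto \mu_{\Phi+t\Psi}$. First I would observe that if $\Phi=\{\varphi_n\}$ and $\Psi=\{\psi_n\}$ are almost additive then so is $\Phi+t\Psi=\{\varphi_n+t\psi_n\}$, and Kingman's theorem~\eqref{eq.Kingman} applied to $\varphi_n$, $\psi_n$ and $\varphi_n+t\psi_n$ yields $\cF_*(\Phi+t\Psi,\mu)=\cF_*(\Phi,\mu)+t\,\cF_*(\Psi,\mu)$ for every $f$-invariant $\mu$. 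The variational principle from~\cite{Feng} then gives
\[
\Ptop(f,\Phi+t\Psi)=\sup_{\mu}\bigl\{h_\mu(f)+\cF_*(\Phi,\mu)+t\,\cF_*(\Psi,\mu)\bigr\},
\]
exhibiting $t\mapsto \Ptop(f,\Phi+t\Psi)$ as a supremum of affine functions of $t$; in particular this map is convex and continuous on $\R$.

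Next, setting $\mu_t:=\mu_{\Phi+t\Psi}$ and testing the variational principle once against $\mu_{t_0}$ and once against $\mu_t$, one obtains, for $t>t_0$,
\[
(t-t_0)\,\cF_*(\Psi,\mu_{t_0})\;\le\;\Ptop(f,\Phi+t\Psi)-\Ptop(f,\Phi+t_0\Psi)\;\le\;(t-t_0)\,\cF_*(\Psi,\mu_t),
\]
with the reversed inequality for $t<t_0$. Dividing by $t-t_0$ and letting $t\to t_0^\pm$ forces both one-sided derivatives to equal $\cF_*(\Psi,\mu_{t_0})$ \emph{provided} that $t\mapsto\cF_*(\Psi,\mu_t)$ is continuous at $t_0$. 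Establishing this single continuity statement therefore delivers, in one go, the differentiability at $t_0$, the identification of the derivative with $\cF_*(\Psi,\mu_{\Phi+t_0\Psi})$, and the global $C^1$ claim.

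To prove that continuity I would argue as follows. Asymptotic additivity (inherited from the almost additive hypothesis via~\eqref{eq.asymptotic}) produces, for each $\xi>0$, a function $\psi_\xi\in C(M,\R)$ with $|\cF_*(\Psi,\mu)-\int\psi_\xi\,d\mu|\le \xi$ uniformly in $\mu$, so $\mu\mapsto\cF_*(\Psi,\mu)$ is a uniform limit of weak-$*$ continuous linear functionals and is itself weak-$*$ continuous on the compact set of $f$-invariant probabilities (and likewise for $\cF_*(\Phi,\cdot)$). Combined with the upper semicontinuity of $\mu\mapsto h_\mu(f)$ and the continuity of $t\mapsto\Ptop(f,\Phi+t\Psi)$ established above, this implies that any weak-$*$ accumulation point $\nu$ of $(\mu_{t_n})$ along a sequence $t_n\to t_0$ satisfies
\[
h_\nu(f)+\cF_*(\Phi+t_0\Psi,\nu)\;\ge\;\lim_n\Ptop(f,\Phi+t_n\Psi)\;=\;\Ptop(f,\Phi+t_0\Psi),
\]
so $\nu$ is an equilibrium state for $\Phi+t_0\Psi$; uniqueness forces $\nu=\mu_{t_0}$, hence $\mu_{t_n}\to\mu_{t_0}$ weak-$*$, and applying the weak-$*$ continuous functional $\cF_*(\Psi,\cdot)$ yields $\cF_*(\Psi,\mu_{t_n})\to \cF_*(\Psi,\mu_{t_0})$. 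The main obstacle is precisely this joint passage to the limit: one must simultaneously handle $t_n\to t_0$ and $\mu_{t_n}\to \nu$ inside $\cF_*(\Phi+t_n\Psi,\mu_{t_n})$, and what rescues the argument is the linear splitting $\cF_*(\Phi+t\Psi,\cdot)=\cF_*(\Phi,\cdot)+t\,\cF_*(\Psi,\cdot)$ together with the uniform asymptotic additivity approximation, which together decouple the $t$-dependence from the $\mu$-dependence.
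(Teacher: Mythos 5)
The paper does not prove Proposition~\ref{nonbarr}: it is quoted verbatim from Barreira \cite[Theorem~6.3]{Ba10}, so there is no in-paper argument to compare against.

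Your proof is correct and self-contained under the stated hypotheses. The three ingredients are exactly the ones needed: linearity of $\mu\mapsto\cF_*(\Phi+t\Psi,\mu)$ in $t$ (making the pressure a supremum of affine functions, hence convex with the two-sided sandwich you write down), uniform approximation $|\cF_*(\Psi,\mu)-\int\psi_\xi\,d\mu|\le\xi$ coming from asymptotic additivity and $f$-invariance of $\mu$ (giving weak-$*$ continuity of $\mu\mapsto\cF_*(\Psi,\mu)$ on the compact simplex $\cM_1(f)$), and the compactness--semicontinuity argument forcing any accumulation point of $(\mu_{t_n})$ to be the unique equilibrium state $\mu_{t_0}$. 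That last step is the crux and you handle it correctly: $P(t_n)=h_{\mu_{t_n}}(f)+\cF_*(\Phi,\mu_{t_n})+t_n\cF_*(\Psi,\mu_{t_n})$, the entropy term passes to a $\limsup$ bounded by $h_\nu(f)$ by upper semicontinuity, the other two pass to limits by the weak-$*$ continuity you established, and the left side converges by the convexity/continuity of $t\mapsto P(t)$ you already noted. The bounded distortion hypothesis plays no explicit role in the argument (it is presumably there in \cite{Ba10} to guarantee existence/uniqueness of the equilibrium states, which here are assumed outright). This is the standard soft-analytic route to differentiability of the pressure and is, as far as one can tell, in the spirit of Barreira's original proof; it also matches the machinery the paper later develops in its Propositions~\ref{admiss1} and~\ref{admiss2}.

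One small stylistic point: you invoke "continuity of $t\mapsto\Ptop(f,\Phi+t\Psi)$" to get $P(t_n)\to P(t_0)$, but you have only asserted convexity at that stage. This is harmless since finite convex functions on $\R$ are automatically continuous, but it is worth saying so explicitly.
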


For any almost additive sequences of potentials $\Phi$ and $\Psi$ we define the \emph{free energy function}
associated to $\Phi$  and $\Psi$ by
\begin{equation}\label{eq:defEt}
\mathcal{E}_{f , \Phi,\Psi} (t): = P_{\topp}(f , \Phi + t\Psi) - P_{\topp}(f , \Phi).
\end{equation}
for $t \in \R$ such that the right hand side is well defined.

\textcolor{black}{
\begin{remark}
The previous definition is motivated by the following fact that for the additive setting: given observables $\phi,\psi$ with
bounded distortion the free energy function $\cE_{f,\phi,\psi} $, defined originally by
\begin{equation*}
\cE_{f,\phi,\psi}(t)
	= \limsup_{n\to\infty} \frac1n \log \int e^{t S_n  \psi} \,d\mu_{f,\phi},
\end{equation*}
where $S_n  \psi=\sum_{j=0}^{n-1} \psi\circ f^j$ is the usual Birkhoff sum, can often be proved to
satisfy $\mathcal{E}_{f , \phi,\psi} (t) = P_{\topp}(f , \phi + t\psi) - P_{\topp}(f , \phi)$ (see e.g.~\cite{DK,BCV13}).
\end{remark}
}

\textcolor{black}{
If $ \Phi $ and $ \Psi $ are almost additive sequences satisfying the bounded distortion condition then there
exists a unique equilibrium state for the family $\Phi+t\Psi$ for every $t\in\mathbb R$. Then
the differentiability of the pressure function follows from Proposition~\ref{nonbarr}  and, consequently,
the free energy function $t \mapsto \mathcal{E}_{f , \Phi,\Psi}(t)$ is $C^{1}$.}

\begin{proposition}\label{rmkasymp}
Let $H$ be a dense subset of the continuous functions $C(M,\mathbb R)$ in the usual sup-norm $\|\cdot\|_\infty$. If $\Psi=\{\psi_n\}$ is an asymptotically additive sequence of observables
then there exists $(0 ,1) \ni \vep \mapsto g_{\vep} \in H$ so that for any $\vep>0$
$$
\limsup_{n \to +\infty} \frac{1}{n}||\psi_{n} - S_{n}g_{\vep}||_{\infty} < \vep.
$$
\end{proposition}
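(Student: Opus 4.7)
The plan is to prove this directly from the definition of asymptotically additive sequences combined with the density hypothesis on $H$. Roughly, the continuous function $\varphi_\xi$ whose Birkhoff sums approximate $\psi_n$ up to an error $o(n)$ of size less than $\xi$ can itself be approximated in the sup norm by an element of $H$, and the cost of passing from $\varphi_\xi$ to its $H$-approximation on the level of Birkhoff sums is only linear in $n$, so it survives division by $n$.

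More precisely, fix $\vep \in (0,1)$. The first step is to apply the definition of asymptotically additive, equation~\eqref{eq.asymptotic}, with the choice $\xi = \vep/2$. This yields a continuous function $\varphi_{\vep/2} \in C(M,\mathbb R)$ such that
$$
\limsup_{n\to\infty} \frac1n \| \psi_n - S_n \varphi_{\vep/2}\|_\infty < \frac{\vep}{2}.
$$
The second step is to invoke the density of $H$ in $C(M,\mathbb R)$ to select $g_\vep \in H$ with $\|\varphi_{\vep/2} - g_\vep\|_\infty < \vep/4$.

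The third step is the triangle inequality together with the elementary bound $\|S_n \varphi_{\vep/2} - S_n g_\vep\|_\infty \le n\,\|\varphi_{\vep/2} - g_\vep\|_\infty$, which gives
$$
\frac1n \|\psi_n - S_n g_\vep\|_\infty
\le \frac1n\|\psi_n - S_n \varphi_{\vep/2}\|_\infty + \|\varphi_{\vep/2} - g_\vep\|_\infty.
$$
Taking $\limsup_{n\to\infty}$ bounds the right-hand side by $\vep/2 + \vep/4 = 3\vep/4 < \vep$, which is exactly the desired conclusion. The assignment $\vep \mapsto g_\vep$ built this way is well defined on $(0,1)$.

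There is no real obstacle here; the statement is essentially a routine combination of the definition with the density of $H$. The only point worth emphasizing is that the strict inequality in~\eqref{eq.asymptotic} leaves some slack, which lets us split the budget $\vep$ into a part absorbed by the asymptotic approximation of $\psi_n$ by $S_n\varphi_{\vep/2}$ and a part spent on replacing $\varphi_{\vep/2}$ by an element of the dense subclass. This flexibility in the choice of $H$ is what later, in the repeller setting, allows the authors to take $g_\vep$ Hölder continuous (since Hölder functions are dense in $C(M,\mathbb R)$), which is the true motivation behind the lemma.
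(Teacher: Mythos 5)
Your proof is correct and follows essentially the same approach as the paper's: apply the definition of asymptotic additivity to obtain a continuous $\varphi_{\vep/2}$ whose Birkhoff averages approximate $\psi_n/n$, then use density of $H$ to replace $\varphi_{\vep/2}$ by $g_\vep\in H$, noting that the resulting change in the Birkhoff average is controlled by $\|\varphi_{\vep/2}-g_\vep\|_\infty$ uniformly in $n$. The only difference is cosmetic (you budget $\vep/2+\vep/4$ where the paper budgets $\vep/2+\vep/2$).
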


\begin{proof}
Since $\Psi=\{\psi_n\}$ is an asymptotically additive sequence of observables
there exists a family $(\tilde g_{\vep})_\vep$ of continuous functions  such that for every small $\vep > 0$
we have that $\limsup_{n \to +\infty}\frac{1}{n}||\psi_{n} - S_{n} \tilde g_{\vep}||_{\infty} < \vep/2$. Since
$H \subset C(M,\mathbb R)$ is dense then there exists a family $( g_{\vep})_\vep$ of observables in $H$
such that $\| g_\vep -\tilde g_\vep\|_\infty < \vep/2$ for all $\vep$. The later implies that the Birkhoff averages
are $\vep/2$ close, thus proving the lemma.
\end{proof}

Since the {\color{black}thermodynamic} formalism for expanding maps is well adapted the space
of H\"older continuous potentials we will take $H=C^{\alpha}(M , \R)$ for some
$\al\in(0,1)$.   Given $\Psi =\{\psi_{n}\}_{n}$ almost additive it follows e.g.  from \cite[Proposition~2.1]{ZZC11}
that this sequence is asymptotically additive and thus we can assume the approximations above are by
H\"older continuous functions. We will refer to such families of functions as an \emph{admissible family for $\Psi$}
and denote it by  $\{g_{\vep}\}_{\vep}$. In what follows let $\al\in(0,1)$ be fixed.

\begin{definition}
Let $\Psi$ be an almost additive sequence of observables. We will say that $\Psi$ is \emph{cohomologous to a constant} if there exists an admissible family $\{g_{\vep}\}_{\vep}$
for $\Psi$ such that $g_{\vep}$ is cohomologous to a constant for every small $\vep \in (0 , 1)$,
that is, there exists a constant $c_\vep$ and a \emph{continuous} function $u_\vep$ so
that  $g_\vep = u_\vep\circ f -u_\vep+c_\vep$.
\end{definition}

A natural question is to understand which families are cohomologous to a constant. Such characterization
is assured by the next lemma.

\begin{lemma}\label{noncoho}
$\Psi = \{\psi_{n}\}_{n}$ is cohomologous to a constant if
only if $(\frac{\psi_{n}}{n})_n$ is uniformly convergent to a constant.
\end{lemma}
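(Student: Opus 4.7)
The plan is to prove each implication separately, leveraging the telescoping identity for Birkhoff sums of coboundaries in one direction, and exhibiting a trivial admissible family in the other.

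For the forward direction, I would start from the hypothesis that $\Psi$ admits an admissible family $\{g_\vep\}_\vep$ in $H=C^\alpha(M,\R)$ with each $g_\vep=u_\vep\circ f - u_\vep + c_\vep$ for some continuous $u_\vep$ and constant $c_\vep$. The telescoping computation gives $S_n g_\vep = u_\vep\circ f^n - u_\vep + n c_\vep$, so since $M$ is compact and $u_\vep$ is continuous (hence bounded), $\frac1n S_n g_\vep \to c_\vep$ uniformly. Combining with the admissibility condition
\[
\limsup_{n\to\infty} \frac1n \|\psi_n - S_n g_\vep\|_\infty < \vep,
\]
the triangle inequality yields $\limsup_{n\to\infty} \|\frac{\psi_n}{n} - c_\vep\|_\infty < \vep$ for every small $\vep$.

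Next I would show the family $(c_\vep)_\vep$ is Cauchy as $\vep\to 0$: for any $\vep_1,\vep_2$ small, using $|c_{\vep_1}-c_{\vep_2}| \leq \|\frac{\psi_n}{n}-c_{\vep_1}\|_\infty + \|\frac{\psi_n}{n}-c_{\vep_2}\|_\infty$ for each $n$ and passing to the limsup gives $|c_{\vep_1}-c_{\vep_2}|\leq \vep_1+\vep_2$. Setting $c:=\lim_{\vep\to 0} c_\vep$, one more triangle inequality
\[
\limsup_{n\to\infty} \Big\|\frac{\psi_n}{n}-c\Big\|_\infty \leq \limsup_{n\to\infty} \Big\|\frac{\psi_n}{n}-c_\vep\Big\|_\infty + |c_\vep - c| < \vep + |c_\vep - c|
\]
and letting $\vep\to 0$ gives uniform convergence of $\frac{\psi_n}{n}$ to $c$.

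The converse is almost immediate: if $\frac{\psi_n}{n}\to c$ uniformly, I would take the constant function $g_\vep\equiv c$ for every $\vep\in(0,1)$. Constants lie in $C^\alpha(M,\R)=H$, and each $g_\vep$ is trivially cohomologous to a constant (take $u_\vep\equiv 0$ and $c_\vep=c$). The admissibility is clear since $\|\psi_n - S_n g_\vep\|_\infty = \|\psi_n - nc\|_\infty = n\|\frac{\psi_n}{n}-c\|_\infty$, whose $\frac1n$-multiple tends to $0$, hence is less than $\vep$ asymptotically.

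The main (and really the only) technical point is the forward direction, which rests on the uniform control of the coboundary term $\frac{1}{n}(u_\vep\circ f^n - u_\vep)$ provided by compactness of $M$ and continuity of $u_\vep$. There is no genuine obstacle; the argument is a diagonal-type extraction along the approximating sequence combined with the telescoping identity.
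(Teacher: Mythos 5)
Your proof is correct and follows essentially the same route as the paper: telescoping the coboundary, using compactness of $M$ to control $\frac1n(u_\vep\circ f^n - u_\vep)$, and exhibiting the constant admissible family for the converse. The only (cosmetic) difference is that you spell out explicitly that $(c_\vep)_\vep$ is Cauchy, a step the paper leaves implicit in the sentence ``which proves that $c=\lim_{\vep\to0}c_\vep$ does exist.''
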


\begin{proof}
On the one hand, if $\Psi$ is cohomologous to a constant then there exists an admissible family
$\{g_{\vep}\}_{\vep}$
for $\Psi$ such that $g_{\vep}$ is cohomologous to a constant for every small $\vep \in (0 , 1)$, that is,
there are constants $c_\vep\in \mathbb R$ and continuous functions $u_\vep$ such that
$g_\vep=u_\vep\circ f - u_\vep+c_\vep$ and, consequently, $S_n g_\vep=u_\vep\circ f^n - u_\vep+c_\vep n$
for every small $\vep$.
Using the convergence given by equation~\eqref{eq.asymptotic} it follows that for every small $\vep$
$$
\limsup_{n\to\infty} \Big\|  \frac{\psi_n}{n} - c_\vep \Big\|_\infty
    = \limsup_{n\to\infty} \frac1n \Big\| \psi_n - S_{n}g_{\vep} + u_\vep\circ f^n - u_\vep\Big\|_\infty <\vep,
$$
which proves that $c=\lim_{\vep\to 0} c_\vep$ does exist and that $(\frac{\psi_{n}}{n})_n$ is uniformly convergent to
the constant $c$.
On the other hand, if $(\frac{\psi_{n}}{n})_n$ is uniformly convergent to a constant $c$ then take $g_\vep$ constant
to $c$ and notice that since $S_n g_\vep =c n$ then clearly
$$
\limsup_{n\to\infty} \frac1n  \Big\|  \psi_n -S_n g_\vep \Big\|_\infty=0.
$$
This finishes the proof of the lemma.
\end{proof}

\begin{remark}
Let us notice that the notion of cohomology for families of observables is slightly different from the corresponding
one for a fixed observable. Indeed, for instance by the previous lemma the family $\Psi=\{\psi_n\}_n$ with $\psi_n=\sqrt{n} \,w$
is cohomologous to the constant 0 although the observable $w: M \to \R$ may be chosen to be not  cohomologous to a constant.
\end{remark}

Observe that it follows from the definition that if $ \Psi $ is not cohomologous to a constant then there is a
admissible family $ \{g_{\vep}\}_{\vep} $ for $ \Psi $ and a sequence $(\vep_k)_k$ converging
to zero such that $ g_{\vep_k} $ is not cohomologous  to a constant for every $k\ge 1$.
If this is the case, the family $\vep \mapsto \tilde g_\vep$ given by
$\tilde g_\vep=g_{\vep_k}$ for every  $\vep_{k}\le \vep < \vep_{k-1}$ is so that $\tilde g_\vep$
is not cohomologous to a constant for every small $\vep$ (notice that these ``step
functions'' could be chosen in many different ways).
We will say that such family $ \{\tilde g_{\vep}\}_{\vep} $ is \emph{not cohomologous} to a constant.
Then, for simplicity, given any $ \Psi $ is not cohomologous to a constant we shall
consider the approximations by admissible sequences $(g_\vep)_\vep$ such that $g_\vep$ is not
cohomologous to a constant  for any small $\vep$.

Assume $ \Phi, \Psi $ are almost additive sequences of potentials with the bounded distortion condition such that $ \Psi $ is not cohomologous to a constant and let $(\varphi_{\vep})_{\vep} $ and  $\{g_{\vep}\}_{\vep} $ be admissible families for $\Phi$ and $ \Psi $ respectivelly. Then the well defined free energy function
$t\mapsto \mathcal E_{f, \varphi_{\vep}, g_{\vep}}$ is strictly convex and so it makes sense to compute the
Legendre transform $ I_{f, \varphi_{\vep}, g_{\vep}}(t) $ for every small $\vep \in (0, 1) $ and $ t \in \R $.
Since each $ g_{\vep} $ is not cohomologous to a constant it is a classical result that the following variational property holds
\begin{equation}\label{eq:variational}
I_{f,\varphi_{\vep},g_{\vep}}(\mathcal{E}'_{f,\varphi_{\vep},g_{\vep}}(t)) = t \mathcal{E}'_{f,\varphi_{\vep},g_{\vep}}(t) - \mathcal{E}_{f,\varphi_{\vep},g_{\vep}}(t)
\end{equation}
 for every small $\vep \in (0 , 1)$ and $t \in \R$ (see e.g. \cite{DK,BCV13}). Using this variational property we prove in Section~\ref{sec:free} that it is possible define the \emph{Legendre transform} of the corresponding free energy functions of $ \Psi $
as
$$
I_{f , \Phi, \Psi}(s) := \lim_{\vep \to 0}I_{f,\varphi_{\vep},g_{\vep}}(s),
$$
for every $ s \in \big(\inf_{t \in \R}\mathcal{F}_{\ast}(\Psi , \mu_{\Phi + t \Psi})
    \;,\; \sup_{t \in \R}\mathcal{F}_{\ast}(\Psi , \mu_{\Phi + t \Psi}) \big) $,  since
this limit will not depend of the choices of families $\{\varphi_{\vep}\}_{\vep}$ and $\{g_{\vep}\}_{\vep}$.
\textcolor{black}{Recall that, in view of Proposition~\ref{nonbarr}, the free energy function $\mathcal{E}_{f , \Phi,\Psi} (\cdot)$
is $C^1$.
We establish some properties of the Legendre transform $I_{f , \Phi, \Psi}(\cdot)$ as follows.}

\begin{maintheorem}\label{legtrans}
Let $ M $ be a Riemannian manifold, $f: M \rightarrow M $ be a $C^{1}$-map and $ \Lambda \subset M $ be an
isolated repeller such that $f\mid_\Lambda$ is topologically mixing.
 Let $\Phi$ and $\Psi$ be almost additive sequences satisfying the
bounded distortion condition
and assume that $\Psi$ is not cohomologous to a constant.
  The following properties hold:
 \begin{itemize}
  \item[i.] the Legendre transform of $ \Psi $ satisfies the variational property
  $$
  I_{f,\Phi,\Psi}(\mathcal{E}'_{f,\Phi,\Psi}(t)) = t \mathcal{E}'_{f,\Phi,\Psi}(t) - \mathcal{E}_{f,\Phi,\Psi}(t),
  $$
  for every $t \in \R$;
  \item[ii.] $I_{f,\Phi,\Psi}(\cdot)$ is a non-negative convex function and
  $$
  \inf_{s \in (a , b)} I_{f,\Phi,\Psi}(s) = \min\{I_{f,\Phi,\Psi}(a) , I_{f,\Phi,\Psi}(b)\}
  $$
  for any interval $(a , b) \subset \R$ not containing $\mathcal{F}_{\ast}(\Psi, \mu_{\Phi} )$
\textcolor{black}{
    \item[iii.]
   $
  I_{f,\Phi,\Psi}(s)
    = \inf_{\eta \in \mathcal{M}_{1}(f)}
    \{P_{\topp}(f, \Phi) - h_{\eta}(f) - \mathcal{F}_{\ast}(\Phi,\eta) : \mathcal{F}_{\ast}(\Psi,\eta) = s\}
  $
  \item[iv.]
  $I_{f,\Phi,\Psi}(s) = 0$ if only if $s=\mathcal{F}_{\ast}(\Psi,\mu_{\Phi} )$; moreover $s \mapsto I_{f,\Phi,\Psi}(s)$
   is strictly convex in an open neighborhood of $\mathcal{F}_{\ast}(\Psi,\mu_{\Phi} )$.
   }
 \end{itemize}
\end{maintheorem}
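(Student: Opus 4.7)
The overall strategy is to transfer the classical additive large deviations properties of the rate functions $I_{f,\varphi_\vep,g_\vep}$ associated to the admissible H\"older approximations to their limit $I_{f,\Phi,\Psi}$. The backbone is the continuity of the topological pressure along the admissible families: from the variational principle together with almost additivity one gets
\begin{equation*}
\bigl|P_{\topp}(f,\Phi+t\Psi) - P_{\topp}(f,\varphi_{\vep}+tg_{\vep})\bigr| \;\leq\; (1+|t|)\,\vep,
\end{equation*}
for all sufficiently small $\vep$, so that $\mathcal{E}_{f,\varphi_\vep,g_\vep}(\cdot) \to \mathcal{E}_{f,\Phi,\Psi}(\cdot)$ uniformly on compact subsets of $\R$. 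By Proposition~\ref{nonbarr} the limit function is $C^{1}$ with derivative $\mathcal{E}'_{f,\Phi,\Psi}(t)=\mathcal{F}_{\ast}(\Psi,\mu_{\Phi+t\Psi})$, and the hypothesis that $\Psi$ is not cohomologous to a constant allows us, via Lemma~\ref{noncoho}, to select admissible $g_\vep$ which are themselves not cohomologous to constants, so each $\mathcal{E}_{f,\varphi_\vep,g_\vep}$ is strictly convex and real analytic by the classical theory for H\"older potentials on repellers.

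For item (i), the classical theory gives \eqref{eq:variational} for every $\vep>0$. The pointwise convergence of convex functions with $C^{1}$ limit implies, by a standard Rockafellar-type theorem, convergence of derivatives $\mathcal{E}'_{f,\varphi_\vep,g_\vep}(t) \to \mathcal{E}'_{f,\Phi,\Psi}(t)$ for every $t$. Passing to the limit $\vep \to 0$ in \eqref{eq:variational} yields (i) and simultaneously identifies $I_{f,\Phi,\Psi}$ with the Legendre transform of $\mathcal{E}_{f,\Phi,\Psi}$ on the image of its derivative. Item (ii) is then immediate: non-negativity and convexity pass through pointwise limits; since (i) at $t=0$ forces $I_{f,\Phi,\Psi}(\mathcal{F}_{\ast}(\Psi,\mu_\Phi))=0$, the convex rate function is monotone on each half-line away from this minimizer, so the infimum on any interval $(a,b)$ not containing $\mathcal{F}_{\ast}(\Psi,\mu_\Phi)$ equals $\min\{I_{f,\Phi,\Psi}(a),I_{f,\Phi,\Psi}(b)\}$.

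For item (iii), rather than passing to the limit in the $\vep$-dependent constraint $\int g_\vep\,d\eta=s$, I would apply Legendre duality directly to the $C^{1}$ limit free energy and then insert the variational principle:
\begin{equation*}
I_{f,\Phi,\Psi}(s) \;=\; P_{\topp}(f,\Phi) - \inf_{t\in\R}\sup_{\eta\in \mathcal{M}_{1}(f)}\bigl[h_\eta(f)+\mathcal{F}_{\ast}(\Phi,\eta) + t(\mathcal{F}_{\ast}(\Psi,\eta)-s)\bigr].
\end{equation*}
Sion's minimax theorem applies since $\mathcal{M}_{1}(f)$ is convex and compact in the weak-$\ast$ topology, the functional is affine in $t$, and $\eta\mapsto h_\eta(f)+\mathcal{F}_{\ast}(\Phi,\eta)$ is upper semicontinuous affine (upper semicontinuity of entropy holds on repellers and $\mathcal{F}_{\ast}$ is affine as a limit of Birkhoff-type integrals). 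Swapping the order, the inner infimum equals $0$ on measures with $\mathcal{F}_{\ast}(\Psi,\eta)=s$ and $-\infty$ otherwise, giving the constrained characterization. For item (iv), the strict convexity of $\mathcal{E}_{f,\Phi,\Psi}$ together with $C^{1}$-regularity forces its conjugate $I_{f,\Phi,\Psi}$ to be strictly convex on the image of $\mathcal{E}'_{f,\Phi,\Psi}$, and the unique zero is exactly $s=\mathcal{E}'_{f,\Phi,\Psi}(0)=\mathcal{F}_{\ast}(\Psi,\mu_\Phi)$.

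The main obstacle is the minimax exchange in (iii), where Sion's hypotheses must be verified in the weak-$\ast$ topology, including handling the unboundedness of $t\in\R$; a standard workaround is to restrict $t$ to a large compact interval using growth bounds on $P_{\topp}(f,\Phi+t\Psi)$ coming from the sup norms of the potentials, and then use a limiting argument. A secondary difficulty is inheriting strict convexity of $\mathcal{E}_{f,\Phi,\Psi}$ from the approximating family, since pointwise convergence of convex functions alone does not suffice; the argument hinges on the $C^{1}$ identification of $\mathcal{E}'_{f,\Phi,\Psi}$ with $t\mapsto\mathcal{F}_{\ast}(\Psi,\mu_{\Phi+t\Psi})$ and the strict monotonicity of this map, which is a consequence of uniqueness of the equilibrium state for $\Phi+t\Psi$ combined with the non-cohomology assumption on $\Psi$.
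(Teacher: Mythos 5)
Your handling of items (i) and (ii) is essentially the paper's argument: pass to the limit in the $\vep$-level variational identity for the admissible families, and inherit non-negativity, convexity, and the monotone behaviour away from the minimizer. For item (iii) your route is genuinely different. The paper proves an auxiliary lemma identifying $\lim_{\vep\to 0}\sup\{h_\eta(f)+\int\varphi_\vep\,d\eta : \int g_\vep\,d\eta = s\}$ with the constrained supremum $\sup\{h_\eta(f)+\mathcal{F}_{\ast}(\Phi,\eta):\mathcal{F}_{\ast}(\Psi,\eta)=s\}$, using compactness of $\mathcal{M}_1(f)$ and upper semicontinuity of entropy to control accumulation points of the maximizing measures as $\vep\to 0$. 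You instead bypass the $\vep$-dependent constraints entirely and apply Lagrangian duality (Sion's minimax) directly to the limit free energy $\mathcal{E}_{f,\Phi,\Psi}$. That is conceptually cleaner, and you correctly identify the main technical obstacle, namely the non-compactness of the dual variable $t\in\R$; the truncation-and-limit workaround you sketch still needs exactly the compactness and upper semicontinuity of entropy that the paper's auxiliary lemma rests on, so in the end the two routes lean on the same ingredients.

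For item (iv) there is a genuine gap. You claim that strict monotonicity of $t\mapsto\mathcal{F}_{\ast}(\Psi,\mu_{\Phi+t\Psi})$ (equivalently strict convexity of $\mathcal{E}_{f,\Phi,\Psi}$) ``is a consequence of uniqueness of the equilibrium state for $\Phi+t\Psi$ combined with the non-cohomology assumption on $\Psi$'', but that implication is precisely the non-trivial content of (iv) and is not immediate. If $\mathcal{E}'_{f,\Phi,\Psi}$ fails to be strictly monotone, it is constant on a non-degenerate interval, $t\mapsto P_{\topp}(f,\Phi+t\Psi)$ is affine there, and a short computation with the variational principle and uniqueness shows a single measure is the equilibrium state for every $\Phi+t\Psi$ in that interval. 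To conclude that $\Psi$ must then be cohomologous to a constant one needs an almost-additive analogue of Bowen's cohomological rigidity: two almost-additive sequences with the same unique equilibrium state are cohomologous. The paper invokes Zhou~\cite{YZhou} for exactly that step, and additionally separates the degenerate behaviour of the rate function into a constant case (excluded via item (iii) and uniqueness of $\mu_\Phi$) and an affine non-constant case (excluded by solving the linear relation $c\,\mathcal{E}'_{f,\Phi,\Psi}(t)=t\mathcal{E}'_{f,\Phi,\Psi}(t)-\mathcal{E}_{f,\Phi,\Psi}(t)$ explicitly and then applying uniqueness and the rigidity result). Without that rigidity input your deduction of strict convexity of $\mathcal{E}_{f,\Phi,\Psi}$, and hence of its Legendre transform, does not go through.
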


%%%%%%%%%%%%%%%%%%%%%%%%%%%%
\subsubsection*{Large deviations results}

The variational relation obtained in Theorem~\ref{legtrans} is of particular interest in the study of
large deviations. In \cite{Va13}, the first author and Zhao proved several large deviations results for
sub-additive and asymptotically additive sequences of potentials. In the case of expanding maps
and almost additive sequences of potentials Theorem~\ref{legtrans} leads to the following immediate consequence:

\begin{maincorollary}\label{cor:LDP}
Let $ M $ be a Riemannian manifold, $f: M \rightarrow M $ be a $C^{1}$-map and $ \Lambda \subset M $ be an
isolated repeller such that $f\mid_\Lambda$ is topologically mixing. Let
$\Phi=\{\varphi_n\}$ be an almost additive sequence of
potentials satisfying the bounded distortion condition and
$\mu_\Phi$ be the unique equilibrium state for $f\mid_{\Lambda}$ with
respect to $\Phi$.
If $\Psi=\{\psi_n\}$ is a family of almost additive potentials
satisfying the bounded distortion condition then it satisfies the
following large deviations principle: given $F \subset \R$ closed it holds
that
$$
\limsup_{n\to \infty} \frac 1 n \log \mu_\Phi \left(
\left\{x\in M: \frac 1 n \psi_n(x)\in F \right\}\right)
    \leq -\inf_{s\in F} I_{f, \Phi,\Psi}(s)
$$
and also for every open set $E\subset \R$
$$
\liminf_{n\to \infty} \frac 1 n \log \mu_\Phi \left(
\left\{x\in M: \frac 1 n \psi_n(x)\in E \right\}\right)
    \geq -\inf_{s\in E} I_{f, \Phi,\Psi}(s).
$$
\end{maincorollary}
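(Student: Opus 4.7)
The plan is to combine the abstract large deviations principle for weak Gibbs measures and asymptotically additive sequences established in~\cite{Va13} with the variational identification of the rate function provided by item~(iii) of Theorem~\ref{legtrans}. The LDP from~\cite{Va13} produces the two bounds in the statement with a rate function a priori written as an infimum over invariant measures, and Theorem~\ref{legtrans}(iii) is precisely what is needed to recognise that infimum as $I_{f,\Phi,\Psi}$.

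First I would verify that the hypotheses of the large deviations theorem of~\cite{Va13} are met in the present setting. Since $\Phi$ is almost additive with bounded distortion on the isolated mixing repeller $\Lambda$, its unique equilibrium state $\mu_\Phi$ is in fact a Gibbs measure (cf.~\cite{Barreira, Mummert}), hence a weak Gibbs measure. Because $\Psi$ is almost additive, it is asymptotically additive by~\cite{ZZC11}, and the bounded distortion bound $\sup_n \gamma_n(\Psi,\delta) \leq A$ immediately implies the weak Bowen condition~\eqref{eweakBowen}, and hence the tempered distortion condition~\eqref{eq:tempered}. Applying the main result of~\cite{Va13} to the pair $(\mu_\Phi,\Psi)$ then yields, for every closed $F\subset \R$ and every open $E\subset \R$, the two bounds in the corollary but with the rate function initially presented in variational form
$$
\tilde{I}(s) \,:=\, \inf_{\eta \in \mathcal{M}_{1}(f)} \bigl\{ P_{\topp}(f,\Phi) - h_{\eta}(f) - \mathcal{F}_{\ast}(\Phi,\eta) \,:\, \mathcal{F}_{\ast}(\Psi,\eta) = s \bigr\}.
$$

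To conclude, I would invoke item~(iii) of Theorem~\ref{legtrans}, which asserts that $\tilde{I}(s) = I_{f,\Phi,\Psi}(s)$ throughout the effective domain $\bigl(\inf_{t}\mathcal{F}_{\ast}(\Psi,\mu_{\Phi+t\Psi}),\ \sup_{t}\mathcal{F}_{\ast}(\Psi,\mu_{\Phi+t\Psi})\bigr)$; substituting this identity into the bounds from~\cite{Va13} gives the corollary on this domain. The only point requiring real care, and therefore the main (though mild) obstacle, is the treatment of $s$ outside the effective domain: adopting the standard convention $I_{f,\Phi,\Psi}(s)=+\infty$ there, the identification $\tilde{I}=I_{f,\Phi,\Psi}$ extends to all of $\R$, because the constraint $\mathcal{F}_{\ast}(\Psi,\eta)=s$ is then infeasible and $\tilde{I}(s)=+\infty$ as well. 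Property~(ii) of Theorem~\ref{legtrans} can additionally be used to simplify the infima $\inf_{s\in F}I_{f,\Phi,\Psi}(s)$ and $\inf_{s\in E}I_{f,\Phi,\Psi}(s)$ whenever $F$ or $E$ lies in an interval avoiding $\mathcal{F}_{\ast}(\Psi,\mu_\Phi)$.
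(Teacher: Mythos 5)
Your proposal is correct and follows exactly the route the paper intends: the corollary is stated as an ``immediate consequence'' of the large deviations bounds for weak Gibbs measures in \cite{Va13} combined with the variational identification of the rate function in Theorem~\ref{legtrans}(iii), which is precisely what you do. Your additional remarks verifying the hypotheses of \cite{Va13} and handling the effective domain of $I_{f,\Phi,\Psi}$ are sensible details that the paper leaves to the reader.
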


\begin{remark}
Although these quantitative estimates can be expected to hold for more general asymptotically
additive sequences,
one should mention that an extension of limit theorems from almost-additive to asymptotically additive
sequences of potentials is not immediate by any means. In fact, a simple example of an asymptotically additive
sequence of potentials can be written as
$\psi_n= S_n \psi +a_n$ depending on the sequence of real numbers $(a_n)_n$. If $\psi$ is H\"older
continuous and $a_n = o( \sqrt{n})$ then $(\psi_n)_n$ satisfies the central limit theorem. However, the CLT
fails in a simple way e.g. if $a_n = n^{\frac12 +\vep}$ for any $\vep>0$.
\end{remark}

%%%%%%%%%%%%%%%%%%%%%%%%%%%%
\subsubsection*{Multifractal estimates for the irregular set}

Given an asymptotically additive sequence of observables $\Psi = \{\psi\}_{n}$ and $J \subset \R$ we denote
$$
\overline{X}_{J} =  \{x \in M : \limsup_{n \to +\infty}\frac{1}{n}\psi_{n}(x) \in J\}
$$
and
$$
\underline{X}_{J} =  \{x \in M : \liminf_{n \to +\infty}\frac{1}{n}\psi_{n}(x) \in J\}.
$$
\textcolor{black}{and let $X(J)$ denote the set of points $x \in \Lambda$ so that $\lim_{n \to +\infty} \frac1n \psi_{n}(x)
$ exists and belongs to $J$.}
For any $\delta > 0$ we denote by $J_{\delta}$ the $\delta-$neighborhood of the set $J$ and for a probability
measure $\mu$ we define
$$
L_{J,\mu} := -\limsup_{n \to +\infty}\frac{1}{n}\log \mu\Big(\{x \in M : \frac{1}{n}\psi_{n}(x) \in J\} \Big).
$$
We are now in a position to state our first main result concerning the multifractal analysis of the irregular set.

\begin{maintheorem}\label{thm:B}
Let $M$ be a compact metric space, $f: M\to M$ be continuous, $\Phi = \{\phi_{n}\}$ be an almost additive sequence of potentials with $P_{\topp}(f,\Phi) > -\infty$.
Assume that  $\mu_{\Phi}$ is the unique equilibrium state of $f$ with respect
the $\Phi$, that it is a weak Gibbs measure and that the sequence $\Psi = \{\psi_{n}\}$  satisfies at least one
of the following properties:
\begin{itemize}
\item[(a)] $\Psi$ is asymptotically additive, or
\item[(b)] $\Psi$ is a sub-additive sequence so that
        \begin{itemize}
        \item[i.] it satisfies the weak Bowen condition;
        \item[ii.] $\inf_{n \geq 1}\frac{\psi_{n}(x)}{n} > -\infty$ for every $x \in M$;
        \item[iii.] the sequence $\{\frac{\psi_{n}}{n}\}$ is equicontinuous.
        \end{itemize}
\end{itemize}
Then, for any closed interval $J \subset \R$ and any small $\delta > 0$,
$$
P_{\underline{X}_{J}}(f , \Phi)
    \leq P_{\overline{X}_{J}}(f , \Phi)
    \leq P_{\topp}(f , \Phi) - L_{J_{\delta},\mu_\Phi}
    \leq \Ptop(f , \Phi).
$$
\end{maintheorem}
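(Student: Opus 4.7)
The strategy is to cover both $\overline{X}_J$ and $\underline{X}_J$ by a common ``limsup'' of level sets and then apply a Pesin--Pitskel estimate supplied by the weak Gibbs property of $\mu_\Phi$. Writing $A_n := \{x \in M : \frac{1}{n}\psi_n(x) \in J_\delta\}$, the key observation is that whenever $\limsup_n \frac{\psi_n(x)}{n} \in J$ or $\liminf_n \frac{\psi_n(x)}{n} \in J$, a realizing subsequence $n_k \to \infty$ forces $\frac{\psi_{n_k}(x)}{n_k} \in J_\delta$ for $k$ large. Hence both sets lie in $\bigcap_{N \geq 1} \bigcup_{n \geq N} A_n$; this already yields the first inequality of the chain (the two sets are controlled by the same upper bound below) and lets the collection $\{B(x, n, \vep) : x \in A_n, n \geq N\}$ serve as an admissible cover in the Pesin--Pitskel definition of $P_Z(f,\Phi)$ for $Z \in \{\overline{X}_J, \underline{X}_J\}$. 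The last inequality is immediate from $L_{J_\delta,\mu_\Phi} \geq 0$.

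For the middle bound I would, for each $n$, pick a maximal $(n,\vep)$-separated set $\{x_j\} \subset A_n$, so that the balls $B(x_j, n, \vep)$ cover $A_n$ while the balls $B(x_j, n, \vep/2)$ are pairwise disjoint. The weak Gibbs property then gives
$$
\exp(\phi_n(x_j)) \leq K_n(\vep/2)\, e^{n P_{\topp}(f,\Phi)} \, \mu_\Phi(B(x_j, n, \vep/2)),
$$
and provided each $B(x_j, n, \vep/2)$ lies in $\tilde A_n := \{x : \frac{1}{n}\psi_n(x) \in J_{2\delta}\}$, summing over $j$ yields
$$
\sum_j e^{-sn + \phi_n(x_j)} \leq K_n(\vep/2)\, e^{n(P_{\topp}(f,\Phi) - s)}\, \mu_\Phi(\tilde A_n).
$$
By the definition of $L_{J_{2\delta},\mu_\Phi}$, for any $\eta > 0$ one has $\mu_\Phi(\tilde A_n) \leq e^{-n(L_{J_{2\delta},\mu_\Phi} - \eta)}$ for all large $n$, while $\log K_n(\vep/2) = o(n)$. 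The right-hand side thus tends to $0$ whenever $s > P_{\topp}(f,\Phi) - L_{J_{2\delta},\mu_\Phi} + \eta$; the Carath\'eodory outer measure vanishes for such $s$, so $P_Z(f,\Phi) \leq P_{\topp}(f,\Phi) - L_{J_{2\delta},\mu_\Phi}$, and working with $\delta/2$ throughout recovers the stated inequality.

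The main obstacle is to justify the inclusion $B(x_j, n, \vep/2) \subset \tilde A_n$, i.e.\ to control the oscillation of $\frac{1}{n}\psi_n$ over Bowen balls. Under hypothesis (a) I would use Proposition~\ref{rmkasymp} to replace $\psi_n$ by $S_n g_\xi$ for a continuous approximant $g_\xi$ up to error $o(n)$, and then exploit uniform continuity of $g_\xi$ on the compact space $M$ to bound $\frac{1}{n}|S_n g_\xi(y) - S_n g_\xi(x_j)|$ by the modulus of continuity $\omega_{g_\xi}(\vep)$, which is small for small $\vep$. Under (b) the weak Bowen condition provides $\gamma_n(\Psi,\vep)/n \to 0$, while the equicontinuity of $\{\psi_n/n\}$ and the lower bound $\inf_n \psi_n/n > -\infty$ ensure uniformity in $\vep$ and prevent pathological drifts. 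In either case, first choosing $\vep$ small and then $n$ large absorbs the oscillation into the $\delta$-thickening of $J$, closing the argument.
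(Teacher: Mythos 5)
Your proposal is correct and takes essentially the same route as the paper's own proof: it reduces control of the oscillation of $\psi_n/n$ over Bowen balls to the tempered distortion condition (which the paper isolates as Lemma~\ref{lemma:aux3}), covers the irregular sets level by level via maximal $(n,\vep)$-separated subsets of $X_{J_\delta,n}$, and then combines the weak Gibbs inequality with the definition of $L_{J_\delta,\mu_\Phi}$ to make the Carath\'eodory sum converge. The only cosmetic difference is bookkeeping: the paper first assigns to each $x\in\overline{X}_J$ a single time $m(x)\ge N$ with $x\in X_{J_{\delta/2},m(x)}$ and then separates, whereas you cover each level set $A_n$ in its entirety; both yield the same final estimate.
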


{\color{black} We recall that under the assumptions of the theorem if, additionally, $f$ satisfies the specification property
and the irregular set is non-empty then it carries full topological pressure. The previous theorem shows that each of the sets
$\overline X_J$ and $\underline X_J$ do not carry full topological pressure provided $L_{J_{\delta},\mu_\Phi}>0$.}
In Remark~\ref{veryweak} we indicate few modifications which imply that the estimate
$P_{\underline{X}_{J}}(f , \Phi) \le P_{\topp}(f , \Phi) - L_{J_{\delta},\mu_\Phi} $ holds under the assumption
that $\mu_\Phi$ satisfies a \emph{pointwise weak Gibbs property}, namely, whenever there exists
$\vep_0>0$ such
that for every $0<\vep<\vep_0$ there exists
a positive sequence $(K_{n}(\vep))_{n \in \N}$ so that $\lim \frac{1}{n}\log K_{n}(\vep) = 0$ such that for
$\mu_\Phi$-a.e.
$x \in \Lambda$ there exists a subsequence $n_k(x)\to \infty$ (depending on $x$) satisfying
\begin{equation}\label{eq:veryweak}
K_{n_k(x)}(\vep)^{-1} \leq \frac{\mu_\Phi(B(x,n_{k}(x),\vep))}{e^{-n_{k}(x)P + \phi_{n_{k}(x)}}} \leq K_{n_k(x)}(\vep).
\end{equation}
From \cite[Theorem~B]{Va13} we know that if $\cF_*( \Psi,\mu_{\Phi}) \notin J_{\delta}$ then
$L_{J_{\delta},\mu_\Phi}>0$ and, consequently, the topological pressure of  both sets $\underline{X}_{J}$
and $\overline{X}_{J}$ is strictly smaller than  $\Ptop(f,\Phi)$. 
The bound $P_{\overline{X}_{\mu_{\Phi},\Psi , c}}(f , \Phi) \leq P_{\topp}(f , \Phi) - L_{J,\mu_\Phi}$
holds e.g. if $\delta \mapsto L_{J_{\delta},\mu_\Phi}$ is upper semicontinuous.
In the additive setting this question is overcomed by means of the functional analytic approach used to
define the Legendre transform of the free energy function.
{\color{black}Despite the fact that one}
 misses the functional analytic
approach our approximation method is still sufficient to obtain finer estimates in the uniformly hyperbolic
setting.

\begin{maincorollary}\label{nonadtreg}
{\color{black}
Let $d\ge 1$ and $f:\Sigma \to \Sigma$ be a topologically mixing one-sided subshift of finite type, where
$ \Sigma \subset \{1, \dots d\}^{\mathbb N}$.}
Assume $\Phi=0$ and $\Psi$ is almost additive
sequence of potentials satisfying the bounded distortion condition,
$\Psi$ is not cohomologous to a constant and $ \mathcal{F}_{\ast}( \Psi,\mu_{0}) = 0$,
where $ \mu_{0} $ is the unique maximal entropy measure for $ f $.
Then for any interval $J\subset \R$
$$
h_{\overline{X}_{J} }(f ) \leq h_{\topp}(f ) - I_{f,0,\Psi}(c_*),
$$
where $c_*$ belongs to the closure of ${J}$ is so that $\ds I_{f,0,\Psi}(c_*)=\inf_{s \in J} I_{f,0,\Psi}(s)$.
Moreover,
if $\overline{X}_{J} \neq \emptyset$ then $c_{\ast}$ is a point in the boundary of $J$ and
\begin{align*}
h_{\overline{X}_{J}}(f)
    & = h_{\underline{X}_{J}} (f) = h_{X(c_{\ast})} (f) = h_{X(J)}(f)
     = h_{\topp}(f) - I_{f,0,\Psi}(c_*),
\end{align*}
In particular $\R^+_0 \ni c\mapsto h_{\overline{X}_{\mu_{0},\Psi,c}}(f)$ is continuous, strictly decreasing
and concave in a neighborhood of zero.
\end{maincorollary}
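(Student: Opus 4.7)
The strategy is to sandwich each of the four entropies between matching upper and lower bounds both equal to $h_{\topp}(f) - I_{f,0,\Psi}(c_*)$. The upper bound on $h_{\overline X_J}(f)$ is obtained by specializing Theorem~\ref{thm:B} to $\Phi = 0$ and translating the large deviations quantity $L_{J_\delta,\mu_0}$ via Corollary~\ref{cor:LDP} into the rate function $I_{f,0,\Psi}$. The matching lower bound on $h_{X(c_*)}(f)$ is extracted from the variational formula for $I_{f,0,\Psi}$ in Theorem~\ref{legtrans}(iii): near-optimizers produce ergodic measures supported on the level set $X(c_*)$ with entropy arbitrarily close to $h_{\topp}(f) - I_{f,0,\Psi}(c_*)$. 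The inclusions $X(c_*) \subset X(J) \subset \overline X_J$ and $X(c_*) \subset \underline X_J \subset \overline X_J$, together with the inequality $h_{\underline X_J}(f) \le h_{\overline X_J}(f)$ already contained in Theorem~\ref{thm:B}, then pinch all four entropies to the common value.

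For the upper bound, Theorem~\ref{thm:B} with $\Phi = 0$ gives $h_{\overline X_J}(f) \le h_{\topp}(f) - L_{J_\delta,\mu_0}$ for every small $\delta > 0$. Since $J_\delta \subset \overline{J_\delta}$ we have $L_{J_\delta,\mu_0} \ge L_{\overline{J_\delta},\mu_0}$, and applying Corollary~\ref{cor:LDP} to the closed set $\overline{J_\delta}$ yields $L_{\overline{J_\delta},\mu_0} \ge \inf_{s \in \overline{J_\delta}} I_{f,0,\Psi}(s)$. Because $I_{f,0,\Psi}$ is convex (Theorem~\ref{legtrans}(ii)) it is continuous on the interior of its effective domain, and since $J$ is closed, letting $\delta \to 0$ gives $\inf_{s \in \overline{J_\delta}} I_{f,0,\Psi}(s) \to \inf_{s \in J} I_{f,0,\Psi}(s) = I_{f,0,\Psi}(c_*)$, so
$$h_{\overline X_J}(f) \le h_{\topp}(f) - I_{f,0,\Psi}(c_*).$$

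For the lower bound, Theorem~\ref{legtrans}(iii) specialized to $\Phi = 0$ (using $\mathcal{F}_{\ast}(0,\eta)=0$ and $P_{\topp}(f,0)=h_{\topp}(f)$) produces, for each $\varepsilon > 0$, an $f$-invariant probability $\eta$ with $\mathcal{F}_{\ast}(\Psi,\eta) = c_*$ and $h_\eta(f) \ge h_{\topp}(f) - I_{f,0,\Psi}(c_*) - \varepsilon$. As $\mu \mapsto \mathcal{F}_{\ast}(\Psi,\mu)$ is affine on the simplex of invariant measures for almost additive $\Psi$ (being a pointwise limit of affine functionals) and entropy is likewise affine, an ergodic component of $\eta$ satisfying the same constraints can be selected. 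Kingman's theorem then gives $\eta(X(c_*)) = 1$, and by Bowen's variational inequality for topological entropy of invariant subsets we obtain $h_{X(c_*)}(f) \ge h_\eta(f)$; letting $\varepsilon \to 0$ yields $h_{X(c_*)}(f) \ge h_{\topp}(f) - I_{f,0,\Psi}(c_*)$. The boundary claim follows from Theorem~\ref{legtrans}(iv): $I_{f,0,\Psi}$ is strictly convex in a neighborhood of its unique zero at $\mathcal{F}_{\ast}(\Psi,\mu_0) = 0$, hence strictly monotone on either side of $0$, so when $0 \notin J$ the infimum of $I_{f,0,\Psi}$ over the closed interval $J$ is attained at the endpoint nearest $0$.

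For the final claim, $\overline X_{\mu_0,\Psi,c}$ equals $\overline X_{J_c}$ for the closed set $J_c = (-\infty,-c] \cup [c,+\infty)$, and splitting into the two connected components and applying the preceding formula delivers
$$h_{\overline X_{\mu_0,\Psi,c}}(f) = h_{\topp}(f) - \min\{I_{f,0,\Psi}(c), I_{f,0,\Psi}(-c)\}.$$
Strict convexity of $I_{f,0,\Psi}$ near the origin with $I_{f,0,\Psi}(0) = 0$ forces $I_{f,0,\Psi}(\pm c)$ to vanish quadratically, so $c \mapsto \min\{I_{f,0,\Psi}(c),I_{f,0,\Psi}(-c)\}$ is continuous, strictly increasing and convex on a right-neighborhood of $0$; equivalently $c \mapsto h_{\overline X_{\mu_0,\Psi,c}}(f)$ is continuous, strictly decreasing and concave there. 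The main obstacle anticipated is the lower-bound step, which rests on two delicate ingredients: the ergodic selection of near-optimizers in Theorem~\ref{legtrans}(iii) (requiring affinity of $\mathcal{F}_{\ast}(\Psi,\cdot)$ for almost additive $\Psi$) and Bowen's variational principle applied to the typically non-compact invariant subset $X(c_*)$.
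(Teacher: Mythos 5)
Your upper bound on $h_{\overline X_J}(f)$ follows the paper's route (Theorem~\ref{thm:B} plus the large-deviations estimate plus continuity of the rate function), and your reduction of the final claim to $\min\{I_{f,0,\Psi}(c),I_{f,0,\Psi}(-c)\}$ is also the same. The problem is the lower bound, where you deviate from the paper and open a genuine gap.

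Your argument produces, via Theorem~\ref{legtrans}(iii), an \emph{invariant} measure $\eta$ with $\mathcal F_*(\Psi,\eta)=c_*$ and $h_\eta(f)\ge h_{\topp}(f)-I_{f,0,\Psi}(c_*)-\varepsilon$, and then asserts that ``an ergodic component of $\eta$ satisfying the same constraints can be selected'' because $\mathcal F_*(\Psi,\cdot)$ and entropy are affine. Affinity does not deliver this. If $\eta=\tfrac12 m_1+\tfrac12 m_2$ with $m_1,m_2$ ergodic, $\mathcal F_*(\Psi,m_1)=c_*-1$, $\mathcal F_*(\Psi,m_2)=c_*+1$, $h_{m_1}=0$, $h_{m_2}=2h$, then $\eta$ satisfies the constraint with entropy $h$ but no ergodic component satisfies the constraint at all. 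The functional $\mathcal F_*(\Psi,\cdot)$ averages over the ergodic decomposition; it does not constrain individual components. Repairing this requires an entropy-density/approximation argument for ergodic measures with prescribed $\mathcal F_*$-value (a horseshoe or specification construction), and that is precisely the content of the conditional variational principle $h_{X(\alpha)}(f)=\sup\{h_\eta(f):\mathcal F_*(\Psi,\eta)=\alpha\}$ of Barreira--Doutor, which the paper cites outright as \cite{BD09} to close the lower-bound step. Combining that cited identity with Theorem~\ref{legtrans}(iii) at $\Phi=0$ gives $h_{X(c_*)}(f)=h_{\topp}(f)-I_{f,0,\Psi}(c_*)$ directly, and the chain $h_{X(c_*)}\le h_{X(J)}\le h_{\underline X_J}\le h_{\overline X_J}$ then pinches the four quantities; Bowen's inequality for non-compact sets is not invoked at this point.

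A smaller issue: in the last paragraph you write that strict convexity of $I_{f,0,\Psi}$ near its unique zero ``forces $I_{f,0,\Psi}(\pm c)$ to vanish quadratically.'' Strict convexity gives no such quantitative rate (e.g.\ $|x|^{3/2}$ and $x^4$ are both strictly convex with a zero at the origin). What strict convexity together with $I'_{f,0,\Psi}(0)=0$ does give is that $I_{f,0,\Psi}$ is strictly increasing on a right-neighborhood of $0$ and strictly decreasing on a left-neighborhood, which yields continuity and strict monotonicity of $c\mapsto\min\{I_{f,0,\Psi}(c),I_{f,0,\Psi}(-c)\}$; the convexity of that minimum (hence concavity of the entropy spectrum) is the delicate part and cannot be dismissed with a quadratic-vanishing heuristic.
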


Let us mention that the previous characterization of the topological entropy of level sets was available
in this setting due to Barreira and Doutor~\cite{BD09}, while we can expect analogous estimates
to hold for the topological pressure provided a generalization of the previous results to the context
of the topological pressure \textcolor{black}{holds.} \textcolor{black}{Moreover, the previous result holds for uniformly expanding repellers
with respect to some $C^1$-map on a compact manifold since these admit finite Markov partitions and can be
semi-conjugate to subshifts of finite type.}

%%%%%%%%%%%%%%%%%%%%%%%%%%%%%
\section{Free energy and Legendre transform}\label{sec:free}

%%%%%%%%%%%%%%%%%%%%%%%%%%%%%%%%%%%
\subsection{Non-additive topological pressure for invariant non-compact sets}\label{sec:pressure}

In this subsection we describe the notion of topological pressure for asymptotically additive potentials and not necessarily compact invariant sets.
Let $M$ be a compact metric space, $f :M \rightarrow M$ be a continuous map and $\Phi = \{\phi_{n}\}_{n}$ be
an asymptotically additive sequence of
continuous potentials.
The \emph{dynamical ball} of center $x \in M$, radius $\delta > 0,$ and length $n \geq 1$ is defined by
$$
B(x ,n, \delta) := \{y \in M : d(f^{j}(y) , f^{j}(x)) \leq \delta, \text{ for every } 0 \leq j \leq n \}.
$$
 Let $\Lambda \subset M$ be, fix $\vep > 0.$ Define $\mathcal{I}_{n} = M \times \{n\}$ and $\mathcal{I} = M \times \N.$ For every $\alpha \in \R$ and $N \geq 1$,
define
$$
m_{\alpha}(f , \Phi, \Lambda, \vep, N) := \inf_{\mathcal{G}}\Big\{\sum_{(x , n) \in \mathcal{G}}e^{-\alpha n + \phi_{n}(x)} \Big\},
$$
where the infimum is taken over every finite or enumerable families $\mathcal{G}\subset \cup_{n \geq N}\mathcal{I}_{n}$ such that the collection of sets $\{B(x , n , \vep) : (x , n) \in  \mathcal{G} \}$ cover $\Lambda$.
Since the sequence is monotone increasing in $N$,
{\color{black} the limit
$$
m_{\alpha}(f , \Phi, \Lambda, \vep) := \lim_{N \to +\infty}m_{\alpha}(f , \Phi, \Lambda, \vep, N)
$$
exists }
and
$
P_{\Lambda}(f , \Phi, \vep) := \inf\{\alpha : m_{\alpha}(f , \Phi, \Lambda, \vep) = 0 \} = \sup\{\alpha : m_{\alpha}(f , \Phi, \Lambda, \vep) = +\infty \}.
$
By Cao, Zhang and Zhao \cite{ZZC11}, the \emph{pressure} of $\Lambda$ is defined by the limit:
$$
P_{\Lambda}(f , \Phi) = \lim_{\vep \rightarrow 0}P_{\Lambda}(f , \Phi, \vep).
$$
If $\Lambda = M$ we have that $P_{\Lambda}(f , \Phi)$ corresponds to the \emph{topological pressure} of $f$
with respect to $\Phi$ and is denoted by $P_{\topp}(f , \Phi)$.
If we take a continuous potential $\phi$ we have that $P_{\Lambda}(f,\{\phi_{n}\}_{n})$, for $\phi_{n} = \sum_{i = 0}^{n-1}\phi \circ f^{i}$, is equal the
usual topological pressure of $\Lambda$ with respect to $f$ and $\phi$.
It follows of the definition of relative pressure that if $\Lambda_{1} \subset \Lambda_{2} \subset M$ we will have that $P_{\Lambda_{1}}(f, \Phi) \leq P_{\Lambda_{2}}(f, \Phi)$.
In the asymptotically additive context also we have the following variational principle:

\begin{proposition} \cite{Feng}
Let $M$ be compact metric space, $f : M \rightarrow M$ be continuous map and $\Phi = \{\phi_{n}\}$ a asymptotically additive sequence of potentials. Then
$$
P_{\topp}(f,\Phi) = \sup  \{ h_{\mu}(f) + \mathcal{F}_{\ast}(\Phi , \mu ) : \mu \;\mbox{is a f-invariant probability,}\; \mathcal{F}_{\ast}( \Phi, \mu ) \neq -\infty \},
$$
where the supremum is taken over all $f$-invariant probabilities $\mu$ and $\mathcal{F}_{\ast}( \Phi , \mu) = \lim_{n \to +\infty}\frac{1}{n}\int\phi_{n}d\mu$.
\end{proposition}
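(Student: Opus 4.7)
The plan is to transfer the classical variational principle for continuous potentials to the asymptotically additive setting via an approximation argument based on condition~\eqref{eq.asymptotic}. Fix $\xi>0$ and let $\varphi_\xi\in C(M,\R)$ be as in~\eqref{eq.asymptotic}, so there exists $N=N(\xi)\ge 1$ with $\|\varphi_n - S_n\varphi_\xi\|_\infty < n\xi$ for every $n\geq N$. The goal is to compare both sides of the variational principle for $\Phi$ with the corresponding quantities associated to the single continuous potential $\varphi_\xi$, and then let $\xi\to 0$.

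First I would compare the non-additive topological pressure of $\Phi$ with the classical pressure of $\varphi_\xi$. Using the Carath\'eodory construction recalled in Subsection~\ref{sec:pressure} applied both to the sequence $\Phi=\{\varphi_n\}$ and to the additive sequence $\{S_n\varphi_\xi\}$, the pointwise bound $|\varphi_n - S_n\varphi_\xi|<n\xi$ (for $n\ge N$) yields
\[
e^{-\alpha n + \varphi_n(x)} \leq e^{n\xi}\, e^{-(\alpha-\xi)n + S_n\varphi_\xi(x)}
\]
for every $(x,n)\in\bigcup_{n\ge N}\mathcal{I}_n$, and the analogous reverse inequality. Summing over any admissible cover and taking infima gives $m_{\alpha}(f,\Phi,M,\vep,N) \leq m_{\alpha-\xi}(f,\{S_n\varphi_\xi\},M,\vep,N)$. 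Passing to $N\to\infty$ and $\vep\to 0$, and using the observation (recorded in Subsection~\ref{sec:pressure}) that the Carath\'eodory pressure of the additive sequence $\{S_n\varphi_\xi\}$ coincides with the classical $P_{\topp}(f,\varphi_\xi)$, I obtain $|P_{\topp}(f,\Phi) - P_{\topp}(f,\varphi_\xi)| \leq \xi$. In parallel, for any $f$-invariant probability $\mu$, invariance yields $\int S_n\varphi_\xi\,d\mu = n\int\varphi_\xi\,d\mu$, so dividing $\|\varphi_n - S_n\varphi_\xi\|_\infty<n\xi$ by $n$ and letting $n\to\infty$ produces $|\mathcal{F}_*(\Phi,\mu) - \int\varphi_\xi\,d\mu| \leq \xi$; in particular $\mathcal{F}_*(\Phi,\mu)$ is finite for every invariant $\mu$.

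Now I would apply the classical variational principle $P_{\topp}(f,\varphi_\xi)=\sup_{\mu}\{h_\mu(f)+\int\varphi_\xi\,d\mu\}$ and combine it with the two estimates above. For the $\ge$ direction of the variational principle for $\Phi$, any invariant $\mu$ satisfies
\[
h_\mu(f)+\mathcal{F}_*(\Phi,\mu) \leq h_\mu(f)+\textstyle\int\varphi_\xi\,d\mu+\xi \leq P_{\topp}(f,\varphi_\xi)+\xi \leq P_{\topp}(f,\Phi)+2\xi,
\]
so the supremum is at most $P_{\topp}(f,\Phi)$ after letting $\xi\to 0$. Conversely, choosing $\mu_\xi$ with $h_{\mu_\xi}(f)+\int\varphi_\xi\,d\mu_\xi \geq P_{\topp}(f,\varphi_\xi)-\xi$ gives $h_{\mu_\xi}(f)+\mathcal{F}_*(\Phi,\mu_\xi) \geq P_{\topp}(f,\Phi)-3\xi$, so the supremum is at least $P_{\topp}(f,\Phi)-3\xi$; letting $\xi\to 0$ completes the proof. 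No weak-$*$ compactness or upper semicontinuity of entropy is needed, since I only need the supremum, not a maximizer.

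The main obstacle is the pressure comparison in the first step: the estimate $\|\varphi_n - S_n\varphi_\xi\|_\infty < n\xi$ is only \emph{asymptotic} in $n$, so direct block-by-block comparison of Carath\'eodory sums fails for short dynamical balls. This is handled by noting that $P_\Lambda(f,\Phi,\vep)$ is defined through the limit $N\to\infty$ of $m_\alpha(f,\Phi,\Lambda,\vep,N)$, so the contribution of indices $n<N(\xi)$ is harmless; restricting to covers by balls of length $n\ge N(\xi)$ the exponential comparison becomes a clean shift of the pressure parameter by $\xi$, and the desired Lipschitz-type bound $|P_{\topp}(f,\Phi)-P_{\topp}(f,\varphi_\xi)|\le\xi$ follows.
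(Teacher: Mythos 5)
This proposition is cited in the paper from Feng and Huang \cite{Feng}; the paper offers no proof of its own, so there is nothing in-paper to compare against, and I can only assess your argument on its merits.

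Your proposal is correct and follows the natural route for the asymptotically additive case: transfer the classical Walters/Pesin--Pitskel variational principle for the continuous approximants $\varphi_\xi$ to the sequence $\Phi$ via the two Lipschitz-type comparisons $|P_{\topp}(f,\Phi)-P_{\topp}(f,\varphi_\xi)|\le\xi$ and $|\mathcal{F}_{\ast}(\Phi,\mu)-\int\varphi_\xi\,d\mu|\le\xi$, and then let $\xi\to 0$. You correctly identify and dispose of the only real delicacy, namely that the bound $\|\varphi_n-S_n\varphi_\xi\|_\infty<n\xi$ is merely eventual in $n$, by restricting the Carath\'eodory covers to $n\ge N(\xi)$ and using that the construction passes through the limit $N\to\infty$. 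Your observation that the second estimate simultaneously shows $\mathcal{F}_{\ast}(\Phi,\mu)$ exists and is finite for every invariant $\mu$ (so the constraint $\mathcal{F}_{\ast}(\Phi,\mu)\neq-\infty$ is vacuous in the asymptotically additive case) is also correct. One cosmetic slip: the displayed bound $e^{-\alpha n+\varphi_n(x)}\le e^{n\xi}e^{-(\alpha-\xi)n+S_n\varphi_\xi(x)}$ carries a spurious factor $e^{n\xi}$; as written it would produce a $2\xi$ shift in the pressure parameter, not the $\xi$ shift you subsequently claim. The correct inequality is simply $e^{-\alpha n+\varphi_n(x)}\le e^{-(\alpha-\xi)n+S_n\varphi_\xi(x)}$, which yields $m_{\alpha}(f,\Phi,M,\varepsilon,N)\le m_{\alpha-\xi}(f,\{S_n\varphi_\xi\},M,\varepsilon,N)$ exactly as you then use. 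Either constant suffices to finish, so this does not affect the conclusion. For context, Feng and Huang's own proof addresses the more general asymptotically \emph{sub-additive} case, where no such reduction to the additive variational principle is available and one has to work considerably harder; for the asymptotically additive situation the argument you give is the standard and efficient one.
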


%%%%%%%%%%%%%%%
\subsection{Space of asymptotically additive sequences}

Given a compact metric space $ M $ let us define $\mathbb{A}:= \{\Psi = \{\psi_{n}\}_{n}: \Psi \text{ is asymptotically additive } \} $. The space $\mathbb{A} $ is clearly a vector space with a sum and product by a scalar defined naturally by $\{\psi_{1,n}\}_{n} + \{\psi_{2,n}\}_{n} := \{\psi_{1,n} + \psi_{2,n}\}_{n}$ and
$\lambda \cdot \{\psi_{1,n}\}_{n} := \{\lambda\psi_{1,n}\}_{n}$ for every
$\{\psi_{1,n}\}_{n} , \{\psi_{2,n}\}_{n} \in\mathbb{A}$ and $\lambda \in \R$.
On this vector space structure we shall consider the seminorm:
$||\{\psi_{n}\}_{n}||_\mathbb{A} := \limsup_{n \to \infty}\frac{1}{n}||\psi_{n}||_{\infty}$.
If necessary to consider a norm we can consider the space $\mathbb A$ endowed
with $\| \{\psi_{n}\}_{n} \|_{\mathbb A,0}:=\sup_{n\in \mathbb N} \frac{1}{n}||\psi_{n}||_{\infty}$ which clearly
satisfies $\| \{\psi_{n}\}_{n} \|_{\mathbb A} \le \| \{\psi_{n}\}_{n} \|_{\mathbb A,0}$ for every $ \{\psi_{n}\}_{n} \in {\mathbb A}$. For that reason we shall consider the continuity results with $\mathbb A$ endowed with the
weaker topology induced by the semi norm.
The balls of the seminorm $|| \cdot ||_\mathbb{A} $ form a basis for a topology on $\mathbb{A} $ that will not be metrizable because it is not Hausdorff. However $\mathbb{A}$ with the  aforementioned vector space structure
and with this topology is a 
locally convex topological vector space.
We shall consider $\mathbb{A} $ with this topology and the space of almost additive sequences
of observables with the natural induced topology.
\textcolor{black}{We endow the space $\mathcal M_1(M)$ of probability measures on $M$ with a distance $d$ that
induces the weak$^*$ topology and let $\mathcal M_1(f) \subset \mathcal M_1(M)$ denote the space of $f$-invariant probability measures.}

\begin{proposition}\label{admiss1}
Let $M$ be a compact metric space and $f : M \rightarrow M$ be a continuous map. Then
the following functions are continuous:
\begin{itemize}
 \item[i.] $\mathbb{A} \ni \Phi \mapsto P_{\topp}(f,\Phi)$;
 \item[ii.] $\mathcal{M}_{1}(f) \times\mathbb{A} \ni \textcolor{black}{(\Psi,\mu) \mapsto \mathcal{F}_{\ast}(\Psi,\mu)}$.
\end{itemize}
\end{proposition}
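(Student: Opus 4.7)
The plan is to derive both continuity assertions from the variational principle together with the elementary Lipschitz bound
$$|\mathcal{F}_{\ast}(\Psi, \mu) - \mathcal{F}_{\ast}(\Psi', \mu)| \le \|\Psi - \Psi'\|_{\mathbb{A}},$$
valid for every $f$-invariant probability $\mu$ and every pair $\Psi,\Psi'\in\mathbb{A}$. The first step is to check that for an asymptotically additive sequence $\Psi=\{\psi_n\}$ the quantity $\mathcal{F}_{\ast}(\Psi,\mu)$ is an honest limit: given $\xi>0$ pick $\varphi_\xi\in C(M,\mathbb{R})$ with $\limsup_n \tfrac{1}{n}\|\psi_n-S_n\varphi_\xi\|_\infty<\xi$, use $f$-invariance of $\mu$ to rewrite $\tfrac{1}{n}\int S_n\varphi_\xi\,d\mu=\int\varphi_\xi\,d\mu$, and conclude that $\tfrac{1}{n}\int\psi_n\,d\mu$ is eventually within $2\xi$ of $\int\varphi_\xi\,d\mu$. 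Letting $\xi\to 0$ yields a Cauchy argument promoting the $\limsup$ in the definition of $\mathcal{F}_{\ast}(\Psi,\mu)$ to a true limit. The identity $\mathcal{F}_{\ast}(\Psi,\mu)-\mathcal{F}_{\ast}(\Psi',\mu)=\lim_n \tfrac{1}{n}\int(\psi_n-\psi_n')\,d\mu$ then produces the Lipschitz bound above.

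For item (i), combining this bound with the variational principle gives
$$|P_{\topp}(f,\Phi)-P_{\topp}(f,\Phi')|\le \sup_{\mu\in\mathcal{M}_1(f)}|\mathcal{F}_{\ast}(\Phi,\mu)-\mathcal{F}_{\ast}(\Phi',\mu)|\le \|\Phi-\Phi'\|_{\mathbb{A}},$$
so that $\Phi\mapsto P_{\topp}(f,\Phi)$ is Lipschitz in the seminorm topology. For item (ii), I would establish joint continuity at an arbitrary point $(\Psi_0,\mu_0)$ through the decomposition
$$\mathcal{F}_{\ast}(\Psi,\mu)-\mathcal{F}_{\ast}(\Psi_0,\mu_0) = \bigl[\mathcal{F}_{\ast}(\Psi,\mu)-\mathcal{F}_{\ast}(\Psi_0,\mu)\bigr] + \bigl[\mathcal{F}_{\ast}(\Psi_0,\mu)-\mathcal{F}_{\ast}(\Psi_0,\mu_0)\bigr].$$
The first bracket is at most $\|\Psi-\Psi_0\|_{\mathbb{A}}$, uniformly in $\mu$. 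For the second, choose $\varphi_\xi$ approximating $\Psi_0$ as in the first paragraph, so that $|\mathcal{F}_{\ast}(\Psi_0,\nu)-\int \varphi_\xi\,d\nu|\le \xi$ for every $\nu\in\mathcal{M}_1(f)$, and then use weak$^*$ continuity of $\nu\mapsto \int\varphi_\xi\,d\nu$ to force the second bracket below $3\xi$ whenever $\mu$ is close enough to $\mu_0$.

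I do not anticipate a serious obstacle. The only mildly delicate point is that $\mathbb{A}$ carries a non-Hausdorff topology induced by a seminorm, so strictly speaking one should observe that both $\mathcal{F}_{\ast}(\Psi,\mu)$ and $P_{\topp}(f,\Phi)$ only depend on the $\|\cdot\|_{\mathbb{A}}$-class of $\Psi$ (respectively $\Phi$); but this is already built into the Lipschitz estimate, so the argument goes through without modification.
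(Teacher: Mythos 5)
Your proof is correct, and it reorganizes the argument a bit more cleanly than the paper does. The paper proves (ii) by a direct $\epsilon/6$ chase: it picks a continuous approximant $g_{\epsilon/6}$, a modulus of weak$^*$ continuity $\delta$, and then a single large $n_1$ at which several quantities are simultaneously close to their limits, and strings together a long chain of triangle inequalities; for (i) it simply asserts that the claim is ``clear from the definition of topological pressure and of $\|\cdot\|_{\mathbb{A}}$,'' i.e.\ it implicitly invokes the Carath\'eodory construction rather than the variational principle. You instead isolate the uniform Lipschitz estimate $|\mathcal{F}_{\ast}(\Psi,\mu)-\mathcal{F}_{\ast}(\Psi',\mu)|\le\|\Psi-\Psi'\|_{\mathbb{A}}$, use it together with the variational principle and the inequality $|\sup a - \sup b|\le\sup|a-b|$ to dispose of (i), and then prove (ii) by the decomposition into a ``vary $\Psi$ at fixed $\mu$'' piece controlled by the Lipschitz bound and a ``vary $\mu$ at fixed $\Psi_0$'' piece controlled by passing through $\int\varphi_\xi\,d\mu$ and weak$^*$ continuity. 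The underlying ingredients are the same as in the paper (approximation by Birkhoff sums of a continuous function, $f$-invariance to replace $\frac1n\int S_n\varphi_\xi\,d\mu$ by $\int\varphi_\xi\,d\mu$, weak$^*$ continuity of $\nu\mapsto\int g\,d\nu$), but your version is more modular and makes the Lipschitz structure explicit, which also gives a slightly stronger conclusion for (i). Your remark that everything only depends on the $\|\cdot\|_{\mathbb{A}}$-class, so non-Hausdorffness is harmless, is exactly the right observation. One minor caveat worth noting in passing: the Lipschitz bound for (i) presupposes that the two suprema in the variational principle are finite, which holds as soon as $h_{\topp}(f)<\infty$ (and for asymptotically additive $\Phi$ the term $\mathcal{F}_{\ast}(\Phi,\mu)$ is uniformly bounded, so this is the only possible source of infinity); if $h_{\topp}(f)=\infty$ both pressures are identically $+\infty$ and continuity is trivial.
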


\begin{proof}
The first claim (i) is clear from the definition of topological pressure and the one of $||\cdot||_\mathbb{A}$.
Hence we are left to prove (ii). Given $\Psi_{1} = \{\psi_{1,n}\}_{n} \in\mathbb{A}$ and $\eta_{1} \in \mathcal{M}_{1}(f)$ arbitrary we will prove that
$(\mu , \Psi) \mapsto \mathcal{F}_{\ast}(\Psi,\mu)$ is continuous at
$(\Psi_{1}, \eta_{1})$. Let $\vep>0$ be small and fixed.

Since $\Psi_{1}\in \mathbb A$ there exists a continuous function $g_{\frac{\vep}{6}}$ and $n_{0} \in \N$ such that
$\frac{1}{n}||\psi_{1,n} - S_{n} g_{\frac{\vep}{6}}||_{\infty} < \frac{\vep}{6}$ for all $n \geq n_{0}$.
Moreover, there exists $\delta > 0$ such that if $d(\eta_{1} , \eta_{2}) < \delta$ then
$|\int g_{\frac{\vep}{6}} d\eta_{1} - \int g_{\frac{\vep}{6}} d\eta_{2}| < \frac{\vep}{6}$. Given $\Psi_{2} = \{\psi_{2,n}\}_{n} \in\mathbb{A}$ and $\eta_{2} \in \mathcal{M}_{1}(f)$ arbitrary in such a way that
 $||\Psi_{1} - \Psi_{2}||_\mathbb{A} < \frac{\vep}{6}$ and $d(\eta_{1} , \eta_{2})< \delta$ then there exists
 $n_{1} = n_{1}(\Psi_{2}, \eta_{2}) \geq n_{0}$ so that
 $
 \frac{1}{n_{1}}||\psi_{1,n_{1}} - \psi_{2,n_{1}}||_{\infty} < \frac{\vep}{6},
$
\; %
$
|\frac{1}{n_{1}}\int \psi_{2,n_{1}} d\eta_{2} - \mathcal{F}_{\ast}(\Psi_{2}, \eta_{2})| < \frac{\vep}{6}
$
and also
$
|\frac{1}{n_{1}}\int
\psi_{1,n_{1}} d\eta_{1} - \mathcal{F}_{\ast}(\Psi_{1}, \eta_{1})| < \frac{\vep}{6}.
$
Thus, given $\Psi_{2} = \{\psi_{2,n}\}_{n} \in\mathbb{A}$ and $\eta_{2} \in \mathcal{M}_{1}(f)$ such that $||\Psi_{1} - \Psi_{2}||_\mathbb{A} < \frac{\vep}{6}$ and
$d(\eta_{1} , \eta_{2})
< \delta$ we have that
\begin{align*}
|\mathcal{F}_{\ast}(\Psi_{1}, \eta_{1}) - \mathcal{F}_{\ast}(\Psi_{2},\eta_{2})|
    & \leq |\mathcal{F}_{\ast}(\Psi_{2}, \eta_{2}) - \int g_{\frac{\vep}{6}} d\eta_{2}| +
    \Big| \int g_{\frac{\vep}{6}} d\eta_{2} - \mathcal{F}_{\ast}(\Psi_{1},\eta_{1}) \Big| \\
    & \leq \Big|\frac{1}{n_{1}}\int S_{n_{1}} g_{\frac{\vep}{6}} d\eta_{2} -
            \frac{1}{n_{1}}\int \psi_{1,n_{1}} d\eta_{2} \Big| \\
    &+ \Big| \frac{1}{n_{1}}\int \psi_{1,n_{1}} d\eta_{2} - \mathcal{F}_{\ast}(\Psi_{2}, \eta_{2}) \Big| +
    \Big|\int g_{\frac{\vep}{6}} d\eta_{2} - \int g_{\frac{\vep}{6}} d\eta_{1} \Big| \\
    & +\Big| \int g_{\frac{\vep}{6}} d\eta_{1} - \mathcal{F}_{\ast}(\Psi_{1} , \eta_{1}) \Big|
 \end{align*}
 and so
 \begin{align*}
|\mathcal{F}_{\ast}(\Psi_{1}, \eta_{1}) - \mathcal{F}_{\ast}(\Psi_{2},\eta_{2})|
    & \leq  \frac{\vep}{3} + \Big|\frac{1}{n_{1}}\int \psi_{1,n_{1}} d\eta_{2} - \frac{1}{n_{1}}\int \psi_{2,n_{1}}
        d\eta_{2} \Big| \\
    &+ \Big| \frac{1}{n_{1}}\!\!\int \!\!\psi_{2,n_{1}} d\eta_{2} \!-\! \mathcal{F}_{\ast}(\Psi_{2} , \eta_{2}) \Big|
 +
     \Big| \frac{1}{n_{1}}\!\!\int \!\! \psi_{1,n_{1}} d\eta_{1} \!-\! \mathcal{F}_{\ast}(\Psi_{1} , \eta_{1}) \Big|\\
    &    + \Big| \int g_{\frac{\vep}{6}} d\eta_{1} \!-\! \frac{1}{n_{1}}\int \psi_{1,n_{1}} d\eta_{1} \Big|
\end{align*}
which is smaller than $\vep$.
This proves the continuity of $(\mu , \Psi) \mapsto \mathcal{F}_{\ast}(\Psi , \mu)$.
\end{proof}

Now we study some properties of the topological pressure in the case of repellers.

\begin{proposition}\label{admiss2}
Let $M $ be a Riemannian manifold, $ f: M \rightarrow M $ be a  $C^{1} $-map and
$ \Lambda \subset M $ be an isolated repeller such that $ f\mid_\Lambda $ is topologically mixing. Then:
 \begin{itemize}
  \item[i.] If $\Phi \in\mathbb{A}$ then $P_{\topp}(f,\Phi) = \lim_{\vep \to 0}P_{\topp}(f,g_{\vep})$ for any
        $(g_{\vep})_{\vep}$ admissible family for $\Phi$.
  \item[ii.] If $(\varphi_{\vep})_{\vep}$ is an admissible  family for $\Phi$ and $\mu_{\vep}$ is the unique equilibrium
    state for $f$ with respect to $\varphi_{\vep}$ then every accumulation point of $\mu_{\vep}$ is a equilibrium
    state for $f$ with respect to $\Phi$. In particular,  if there is a unique equilibrium state $\mu_\Phi$ for
    $f$ with respect to $\Phi$ then $\mu_\Phi= \lim_{\vep \to 0}\mu_{\vep}$.
 \end{itemize}
\end{proposition}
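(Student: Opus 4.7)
The natural strategy for both items is to exploit the variational principle together with the uniform approximation property $\limsup_n \frac{1}{n}\|\phi_n - S_n g_\vep\|_\infty < \vep$ that characterizes an admissible family. For part (i) I would observe that for every $f$-invariant probability $\mu$, taking averages of this bound gives $|\mathcal{F}_\ast(\Phi,\mu) - \int g_\vep\,d\mu| \leq \vep$. Applying this to both sides of the variational principle
$$
P_{\topp}(f,\Phi) = \sup_\mu\{h_\mu(f) + \mathcal{F}_\ast(\Phi,\mu)\}, \qquad P_{\topp}(f,g_\vep) = \sup_\mu\{h_\mu(f) + \textstyle\int g_\vep\,d\mu\},
$$
one obtains $|P_{\topp}(f,\Phi) - P_{\topp}(f,g_\vep)| \leq \vep$, which gives the desired convergence upon sending $\vep \to 0$. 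This paragraph should be short and essentially routine, since no dynamical input beyond the variational principle (already stated just above in the paper) is required.

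For part (ii) I would combine three ingredients: part (i), upper semicontinuity of the entropy map $\mu \mapsto h_\mu(f)$ (which holds since $f|_\Lambda$ is expansive, being an isolated repeller for a $C^1$ map), and the continuity of $(\mu,\Psi)\mapsto \mathcal{F}_\ast(\Psi,\mu)$ proved in Proposition~\ref{admiss1}(ii). Let $\vep_k \to 0$ be a subsequence along which $\mu_{\vep_k} \to \mu$ weakly$^*$ in the compact space $\mathcal{M}_1(f)$. Since $\mu_{\vep_k}$ is the equilibrium state for $\varphi_{\vep_k}$ we have
$$
P_{\topp}(f,\varphi_{\vep_k}) = h_{\mu_{\vep_k}}(f) + \int \varphi_{\vep_k}\,d\mu_{\vep_k}
    = h_{\mu_{\vep_k}}(f) + \mathcal{F}_\ast(\{S_n\varphi_{\vep_k}\}, \mu_{\vep_k}).
$$
The admissibility condition says precisely that $\{S_n\varphi_{\vep_k}\} \to \Phi$ in the seminorm $\|\cdot\|_{\mathbb A}$, and combined with $\mu_{\vep_k}\to\mu$ weakly$^*$, Proposition~\ref{admiss1}(ii) yields $\mathcal{F}_\ast(\{S_n\varphi_{\vep_k}\},\mu_{\vep_k}) \to \mathcal{F}_\ast(\Phi,\mu)$. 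Taking $\limsup$ in $k$ and using part (i) on the left-hand side and upper semicontinuity of entropy on the right, I obtain $P_{\topp}(f,\Phi) \leq h_\mu(f) + \mathcal{F}_\ast(\Phi,\mu)$. The reverse inequality is the variational principle itself, so $\mu$ is an equilibrium state for $\Phi$.

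The final assertion follows by a standard compactness argument: if the equilibrium state $\mu_\Phi$ is unique, every accumulation point of the net $(\mu_\vep)_\vep$ in the compact space $\mathcal{M}_1(f)$ must coincide with $\mu_\Phi$, hence $\mu_\vep \to \mu_\Phi$ as $\vep\to 0$. The main subtlety I anticipate is making sure Proposition~\ref{admiss1}(ii) applies in the right regime, that is, checking that the convergence $\{S_n\varphi_{\vep_k}\} \to \Phi$ in the $\|\cdot\|_{\mathbb A}$-seminorm together with weak$^\ast$ convergence of the measures indeed suffices to pass to the limit; this is precisely what joint continuity in Proposition~\ref{admiss1}(ii) delivers, so the argument should go through cleanly.
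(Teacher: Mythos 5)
Your argument is correct and follows essentially the same route as the paper: continuity of $\Phi\mapsto P_{\topp}(f,\Phi)$ in the $\|\cdot\|_{\mathbb A}$-seminorm for (i), then upper semicontinuity of entropy on a repeller plus the joint continuity of $(\mu,\Psi)\mapsto\mathcal F_*(\Psi,\mu)$ from Proposition~\ref{admiss1}(ii) to pass to the limit in the variational equality for (ii), finishing with the standard compactness argument for uniqueness. The only cosmetic difference is that for (i) you derive the $1$-Lipschitz bound on the pressure via the variational principle, whereas the paper deduces it directly from the Carath\'eodory definition of $P_{\topp}$ and of $\|\cdot\|_{\mathbb A}$; these are equivalent and both valid here.
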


\begin{proof}
Property (i) follows from the corresponding item of Proposition~\ref{admiss1}. Now, since $ \Lambda $ is a repeller we have that $ \mu \rightarrow h_{\mu}(f) $ is upper semicontinuous and using the continuity of $(\mu,\Phi)\mapsto
\cF_*(\mu,\Phi)$ we conclude that  every accumulation point of $ \mu_{\vep} $ is equilibrium state for $ f $ with
respect to $ \Phi $. Using the compactness of the space of invariant probabilities, if there exists a unique equilibrium
$ \mu_\Phi $ for $ f $ with respect to $ \Phi $ then the convergence $\mu_\Phi = \lim_{\vep \to 0}\mu_{\vep}$ holds. This finishes the proof of the proposition.
\end{proof}

%%%%%%%%%%%%%%%%%%%%%%%%%%%%%%%%%%%%%%%%%%%%%%%%

\subsection{Free energy function and Legendre transforms}\label{sec:free}

We are interested in the regularity of the rate function in the large deviations principles
obtained in \cite{Va13}. Since there exists no direct functional analytic approach using Perron-Frobenius operators, in order to inherit some properties from the classical {\color{black}thermodynamic} formalism we will use
the approximation by admissible families of H\"older continuous functions.

The next result allows us to define the Legendre transform of $ \Psi $ in terms of the Legendre transform associated to any approximating admissible family.
For every almost additive sequence of potentials $ \Phi $ satisfying the bounded distortion condition we denote by $ \mu_{\Phi} $ the unique equilibrium state of $f$ with respect to
$ \Phi $ (for the existence of $\mu_{\Phi}$ see \cite{Barreira}). For $ \Phi, \Psi \in \mathbb A$ consider the free energy function given by
$\mathcal{E}_{f , \Phi,\Psi}(t) : = P_{\topp}(f , \Phi + t\Psi) - P_{\topp}(f , \Phi)$
for all $ t \in \R $. Observe that
$$
\mathcal{E}_{f , \Phi,\Psi} = \lim_{\vep \to 0} \mathcal{E}_{f , \varphi_{\vep},g_{\vep}},
$$
where $\{\varphi_{\vep}\}_{\vep}$ and $\{g_{\vep}\}_{\vep}$ are any admissible families for $\Phi$ and
$\Psi$ respectively. In fact, the pressure function is continuous in the set of all asymptotically additive
 sequences and so this limit does not depend on the sequence of approximating families and we may
 take H\"older continuous representatives (admissible families).

Assume $ \Phi,\Psi $ are almost additive sequences of potentials satisfying bounded distortion
condition so that $ \Psi $ is not cohomologous to a constant and let
$ \{g_{\vep}\}_{\vep} $ be an admissible family for $ \Psi $ not cohomologous to a constant and
$ (\varphi_{\vep})_{\vep} $ be an admissible family for $ \Phi $. Then it makes sense to define
for every $ \vep \in (0, 1) $
$$
 I_{f, \varphi_{\vep}, g_{\vep}}(t)=\sup_{s\in\mathbb R} \Big( st - \mathcal{E}_{f , \varphi_{\vep},g_{\vep}}(s) \Big)
$$
as the Legendre transform of $\mathcal{E}_{f , \phi_{\vep},g_{\vep}}$.
Since each $ g_{\vep} $ is not cohomologous to a constant then {\color{black}the previous function is defined
over a open interval} and the variational property yields
\begin{equation}\label{eq:freevar}
I_{f,\varphi_{\vep},g_{\vep}}(\mathcal{E}'_{f,\varphi_{\vep},g_{\vep}}(t))
    = t \mathcal{E}'_{f,\varphi_{\vep},g_{\vep}}(t) - \mathcal{E}_{f,\varphi_{\vep},g_{\vep}}(t)
\end{equation}
for all $\vep \in (0 , 1)$ and $t \in \R$, and $\mathcal{E}_{f,\varphi_{\vep},g_{\vep}}$ is strictly convex (see e.g. \cite{DK,BCV13}).
Recalling that
$$
\mathcal{E}_{f , \varphi_\vep,g_\vep} = P_{\topp}(f , \varphi_\vep + t g_\vep) - P_{\topp}(f , \varphi_\vep)
    \quad \text{ and }\quad
    \mathcal{E}'_{f,\varphi_{\vep},g_{\vep}}(t)=\int g_\vep \;d\mu_{\varphi_\vep+t g_\vep}
$$
it follows from Propositions \ref{admiss1} and \ref{admiss2} that for every $t\in\R$
$$
\lim_{\vep \to 0}\mathcal{E}'_{f,\varphi_{\vep},g_{\vep}}(t)
    = \lim_{\vep \to 0} \int g_\vep \; d\mu_{\varphi_\vep+t g_\vep}
    = \mathcal{F}_{\ast}(\Psi , \mu_{f,\Phi + t \Psi}),
$$
that
$
\lim_{\vep\to 0} \mathcal{E}_{f,\varphi_{\vep},g_{\vep}}(t) = P_{\topp}(f , \Phi + t \Psi) - P_{\topp}(f,\Phi),
$
and that these convergences are uniform in compact sets.
Observe that the Legendre transform $I_{f,\varphi_{\vep},g_{\vep}}$ is well
defined in the open interval
$J_\vep=(\inf_t \mathcal{E}'_{f,\varphi_{\vep},g_{\vep}}(t), \sup_{t} \mathcal{E}'_{f,\varphi_{\vep},g_{\vep}}(t)) $.
Thus, we can now define the \emph{Legendre transform}
$$
I_{f , \Phi, \Psi}(s) := \lim_{\vep \to 0}I_{f,\varphi_{\vep},g_{\vep}}(s)
$$
for any
$
s \in J_{\Phi,\Psi}
    :=\big(\inf_{t \in \R}\mathcal{F}_{\ast}(\Psi , \mu_{\Phi + t \Psi})
    \;,\; \sup_{t \in \R}\mathcal{F}_{\ast}(\Psi , \mu_{\Phi + t \Psi}) \big).
$
Note that the previous limiting function and interval do
not depend of the chosen families
$\{\varphi_{\vep}\}_{\vep}$ and $\{g_{\vep}\}_{\vep}$.
A priori $J_{\Phi,\Psi}$ could be a degenerate interval. However the next lemma assures that the closure of this interval is exactly the spectrum of $\Psi$ and, in particular, $J_{\Phi,\Psi}$ is a degenerate interval if and only if $\Psi$ is cohomologous to a constant.

\begin{lemma}
Given $\Psi , \Phi \in \mathbb{A}$ we have that
$$
\Big[\inf_{t \in \R}\mathcal{F}_{\ast}(\Psi , \mu_{\Phi + t \Psi}) \;,\; \sup_{t \in \R}\mathcal{F}_{\ast}(\Psi , \mu_{\Phi + t \Psi}) \Big]
=  \{ \mathcal{F}_{\ast}(\Psi , \eta)  : \eta \in \mathcal{M}_{1}(f) \}
$$
and the interval is degenerate if only if $\Psi$ is cohomologous to a constant.
\end{lemma}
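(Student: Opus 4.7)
The plan is to establish both sides of the equality as compact intervals of $\R$, show they coincide, and then handle the degeneracy characterization through an approximation-and-compactness argument. The first two steps are fairly routine given the continuity and differentiability results collected above; the converse direction of the degeneracy claim is where I expect the main obstacle to lie.

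First I would show that $S := \{\mathcal{F}_{\ast}(\Psi, \eta) : \eta \in \mathcal{M}_{1}(f)\}$ is a compact interval $[a, b]$. Proposition~\ref{admiss1}(ii) gives continuity of $\eta \mapsto \mathcal{F}_{\ast}(\Psi, \eta)$ on the compact convex set $\mathcal{M}_{1}(f)$, and affineness of this map is inherited from that of $\eta \mapsto \int \psi_{n}\, d\eta$ by passing to the limit. Hence $S$ is a continuous affine image of a compact convex set, and so a compact interval.

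The inclusion of the left-hand side in $[a, b]$ is immediate since each $\mu_{\Phi + t\Psi}$ is $f$-invariant. For the reverse inclusion, I would invoke Proposition~\ref{nonbarr} to write $\rho(t) := P_{\topp}(f, \Phi + t\Psi)$ as a $C^{1}$ convex function (convexity follows from the variational principle, as $\rho$ is a supremum of affine functions of $t$), with $\rho'(t) = \mathcal{F}_{\ast}(\Psi, \mu_{\Phi + t\Psi})$. Dividing the variational expression by $t$ and letting $t \to \pm \infty$ one finds $\rho(t)/t \to b$ (respectively $a$); for a $C^{1}$ convex function the asymptotic slope coincides with $\lim_{t \to \pm \infty} \rho'(t)$, so continuity of $\rho'$ forces its image to be an interval with $\sup = b$ and $\inf = a$. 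Consequently $[\inf_{t} \rho'(t), \sup_{t} \rho'(t)] = [a, b]$.

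For the degeneracy claim, the easy direction is that if $\Psi$ is cohomologous to a constant $c$, Lemma~\ref{noncoho} yields $\psi_{n}/n \to c$ uniformly, hence $\mathcal{F}_{\ast}(\Psi, \eta) = c$ for every invariant $\eta$ and $[a, b] = \{c\}$. For the converse, assume $\mathcal{F}_{\ast}(\Psi, \cdot) \equiv c$ on $\mathcal{M}_{1}(f)$; by Lemma~\ref{noncoho} it suffices to prove $\psi_{n}/n \to c$ uniformly on $M$. I would fix an admissible family $\{g_{\vep}\}$ for $\Psi$; the hypothesis together with $\limsup_{n} \tfrac{1}{n} \|\psi_{n} - S_{n} g_{\vep}\|_{\infty} < \vep$ forces $|\int g_{\vep}\, d\eta - c| \leq \vep$ for every $\eta \in \mathcal{M}_{1}(f)$. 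By continuity of $\mu \mapsto \int g_{\vep}\, d\mu$ on the compact metric space $\mathcal{M}_{1}(M)$, for each $\delta > 0$ there is an open neighborhood $U \supset \mathcal{M}_{1}(f)$ on which this integral lies in $[c - \vep - \delta, c + \vep + \delta]$. The hardest step is a uniform compactness argument: using that any weak-$\ast$ accumulation point of the empirical measures $\mu_{n,x} := \tfrac{1}{n} \sum_{j=0}^{n-1} \delta_{f^{j} x}$ is $f$-invariant, one obtains $N = N(\vep, \delta)$ such that $\mu_{n,x} \in U$ for all $x \in M$ and $n \geq N$ (otherwise a diagonal subsequence yields a non-invariant accumulation point, a contradiction). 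Since $\int g_{\vep}\, d\mu_{n,x} = S_{n} g_{\vep}(x)/n$, this gives uniform closeness of $S_{n} g_{\vep}/n$ to $c$; combining with the admissible-family bound and sending $\vep, \delta \to 0$ yields the required uniform convergence of $\psi_{n}/n$ to $c$.
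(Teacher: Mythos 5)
Your proposal is correct, and it takes a genuinely different route from the paper in both parts. For the interval equality, the paper reduces to the additive case via an admissible family $(\varphi_\vep, g_\vep)$ of H\"older potentials: it proves the identity for $\phi + tg$ (using weak$^*$-continuity of equilibrium states together with the $t\to\pm\infty$ argument on the variational principle, exactly as you do) and then passes to the limit $\vep\to 0$ using Propositions~\ref{admiss1}--\ref{admiss2}. You instead bypass the approximation entirely by working directly with the almost-additive pressure function $\rho(t)=P_{\topp}(f,\Phi+t\Psi)$, using Proposition~\ref{nonbarr} for its $C^1$ regularity, convexity from the variational principle (as a sup of affine functions in $t$), and the standard fact that a $C^1$ convex function on $\R$ has $\lim_{t\to\pm\infty}\rho(t)/t=\lim_{t\to\pm\infty}\rho'(t)$. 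This is cleaner, but note it leans on the uniqueness of equilibrium states for $\Phi+t\Psi$ (a hypothesis of Proposition~\ref{nonbarr}); that is a standing assumption in this section, so no harm, but the paper's additive reduction gets away with only the classical H\"older theory. For the degeneracy claim, the paper simply cites \cite[Lemma~2.2]{ZZC11}, whereas you give a self-contained argument via the compactness of $\mathcal{M}_1(M)$ and the fact that weak$^*$ accumulation points of empirical measures $\mu_{n,x}$ (with $n\to\infty$, $x$ arbitrary) are $f$-invariant; this is correct and makes the proof independent of the external reference. One small caveat worth flagging: when you claim $|\int g_\vep\,d\eta - c|\le\vep$, the bound is strict from the asymptotic additivity condition, but you only need the closed bound so this is fine, and the rest of the $2\vep+\delta$ bookkeeping closes the argument.
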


\begin{proof}
Let $(\varphi_{\vep})_\vep$ and $(g_{\vep})_\vep$ be any admissible sequences  for $\Phi$ and $\Psi$
respectively. We have that $\inf_{t \in \R}\mathcal{F}_{\ast}(\Psi , \mu_{\Phi + t \Psi})=\lim_{\vep \to 0} \inf_{t \in \R} \mathcal{E}'_{f,\varphi_{\vep},g_{\vep}}(t)$
(analogously for the supremum) and so
$$
\bigcap_{\vep_{0} \in (0 , 1)}\bigcup_{\vep \geq \vep_{0}}\Big(\inf_{t \in \R}\mathcal{E}'_{f,\varphi_{\vep},g_{\vep}}(t) \;,\; \sup_{t \in \R}\mathcal{E}'_{f,\varphi_{\vep},g_{\vep}}(t) \Big) =
\Big(\inf_{t \in \R}\mathcal{F}_{\ast}(\Psi , \mu_{\Phi + t \Psi}) \;,\; \sup_{t \in \R}\mathcal{F}_{\ast}(\Psi , \mu_{\Phi + t \Psi}) \Big)
$$
and
$$
\bigcap_{\vep_{0} \in (0 , 1)}\bigcup_{\vep \geq \vep_{0}}\Big(\inf_{\eta \in \mathcal{M}_{1}(f)} \int g_{\vep} d\eta \;,\; \sup_{\eta \in \mathcal{M}_{1}(f)} \int g_{\vep} d\eta \Big) =
\Big(\inf_{\eta \in \mathcal{M}_{1}(f)} \mathcal{F}_{\ast}(\Psi , \eta) \;,\; \sup_{\eta \in \mathcal{M}_{1}(f)} \mathcal{F}_{\ast}(\Psi , \eta) \Big).
$$

Let us first prove the result in the additive setting, that is, assuming there are $g,\phi  \in C(M , \R)$ such that
$\varphi_n =S_n \,\phi$ and $\psi_n =S_n \,g$. If this is the case, using the weak* continuity of the equilibrium states
with respect to the potential, the image of the function $T_g: \R \to \R$ given by $t \mapsto \int g \, d\mu_{\phi + tg}$
is an interval.
In addition, given $\eta \in \mathcal{M}_{1}(f)$ and
$t > 0$ we have by the variational principle
$$
h_{\eta}(f) + \int (\phi +t g) \; d\eta
    \leq h_{\mu_{\phi +tg}}(f) + \int (\phi + t g) \; d\mu_{\phi +tg}
$$
and so, dividing by $t$ in both sides and making $t$ tend  to infinity in the expression
$$
\frac{1}{t}h_{\eta}(f) + \frac{1}{t}\int \phi \;d\eta+ \int g \;d\eta
    \leq
\frac{1}{t} h_{\mu_{\phi +tg}}(f) + \frac{1}{t}\int \phi \;d\mu_{\phi +tg} + \int g \; d\mu_{\phi +tg},
$$
we get that
$
\ds\int g d\eta
    \leq \limsup_{t \to +\infty} \int g \;d\mu_{\phi +tg}
    = \int g \;d\mu_*
$
for an $f$-invariant probability $\mu_*$ properly chosen as accumulation point of $(\mu_{\phi +tg} )_t$. This proves
that
$\sup_{\eta\in \cM_1(f)} \int g \; d\eta=\limsup_{t \to +\infty} \int g \;d\mu_{\phi +tg}$.
Proceeding analogously with $-g$ replacing $g$ it follows that
$\inf_{\eta\in \cM_1(f)} \int g \; d\eta
    %=-\limsup_{t \to +\infty} \int -g \;d\mu_{\phi -tg}
    = \liminf_{t \to -\infty} \int g \;d\mu_{\phi +tg}$
    and
\begin{equation}\label{eq.int.add}
\Big[\inf_{t \in \R} \int g \; d \mu_{\phi + t g} \;,\; \sup_{t \in \R} \int g \; d \mu_{\phi + t g} \Big]
    =  \Big\{ \int g \; d \eta  : \eta \in \mathcal{M}_{1}(f) \Big\}.
\end{equation}

Now, to deal with the general non-additive setting, replacing $g$ by $g_\vep$ and also $\phi$ by $\varphi_\vep$
in equation~\eqref{eq.int.add}, and taking the limit as $\vep$ tends to zero it follows that
\begin{equation*}
\Big[\inf_{t \in \R}\mathcal{F}_{\ast}(\Psi , \mu_{\Phi + t \Psi}) \;,\; \sup_{t \in \R}\mathcal{F}_{\ast}(\Psi , \mu_{\Phi + t \Psi}) \Big]
    =  \Big\{ \cF_*(\Psi,\eta) : \eta \in \mathcal{M}_{1}(f) \Big\}.
\end{equation*}
as claimed. This finishes the first part of the proof of the lemma.

Finally, by \cite[Lemma~2.2]{ZZC11} we get
$\inf_{t \in \R}\mathcal{F}_{\ast}(\Psi , \mu_{\Phi + t \Psi}) = \sup_{t \in \R}\mathcal{F}_{\ast}(\Psi , \mu_{\Phi + t \Psi}) $ if
and only if $\frac{\psi_n}{n}$ converges uniformly to a constant, that is, $\Psi$ is cohomologous to a constant.
This finishes the proof of the lemma.
\end{proof}

\begin{remark}
It is not hard to check also that there exists a constant $C>0$ (depending only on $f$) so that
$\frac{P(f,\Phi+t\Psi)}{t}=\mathcal{F}_*(\Psi,\mu_{\Phi+t\Psi}) \pm \frac{C}{t}$ and, consequently, the previous interval
is characterized as the interval of limiting slopes for the pressure function $t\mapsto P(f,\Phi+t\Psi)$.
\end{remark}

%%%%%%%%%%%%%%%%%%%%%%%%%%%
\subsection{Proof of Theorem~\ref{legtrans}}

 Let $\Phi, \Psi$ be almost additive sequences of H\"older continuous
 potentials satisfying the bounded distortion condition so that
 $\Psi$ be is not cohomologous to a constant. In particular $t\mapsto \mathcal{F}_{\ast}( \Psi, \mu_{\Phi+t\Psi})$
 is not a constant function. Moreover, the Legendre transform of the free energy function
 $I_{f,\Phi,\Psi}$ (defined in the previous section) is well defined in an open neighborhood of the mean
 $\mathcal{F}_{\ast} (\Psi, \mu_{\Phi})$.

 Let $ (\varphi_{\vep})_{\vep} $ and $(g_{\vep} )_{\vep} $ be any admissible families for $ \Phi $ and  $ \Psi $, respectivelly. It follows from equation~\eqref{eq:variational} that
$
I_{f,\varphi_{\vep},g_{\vep}}(\mathcal{E}'_{f,\varphi_{\vep},g_{\vep}}(t))
    = t \mathcal{E}'_{f,\varphi_{\vep},g_{\vep}}(t) - \mathcal{E}_{f,\varphi_{\vep},g_{\vep}}(t)
$
for every $t\in\R$
{\color{black}and so, letting $\vep$ converge to}
zero, we obtain that
\begin{equation}\label{eq:variatA}
I_{f,\Phi,\Psi}(\mathcal{E}'_{f,\Phi,\Psi}(t)) = t \mathcal{E}'_{f,\Phi,\Psi}(t) - \mathcal{E}_{f,\Phi,\Psi}(t),
\end{equation}
which proves (i).
Now, since $I_{f,\varphi_{\vep},g_{\vep}}$ is a non-negative convex function for all $\vep \in (0 , 1)$  and is
pointwise convergent to $I_{f,\Phi,\Psi}$ this is also a non-negative convex function.
Clearly, given any interval $(a , b) \subset \R$ not containing
$\mathcal{F}_{\ast}(\Psi,\mu_{\Phi})$ then we know that
$$
\inf_{s \in (a , b)} I_{f,\varphi_{\vep},g_{\vep}}(s) = \min\{I_{f,\varphi_{\vep},g_{\vep}}(a) , I_{f,\varphi_{\vep},g_{\vep}}(b)\},
$$
so the same property will be valid for the limit function $I_{f,\Phi,\Psi}$, which proves (ii).

Let us prove \textcolor{black}{ (iii)}, that is, to establish the variational formula
  $$
  I_{f,\Phi,\Psi}(s)
    = \inf_{\eta \in \mathcal{M}_{1}(f)}
    \{P_{\topp}(f, \Phi) - h_{\eta}(f) - \mathcal{F}_{\ast}(\Phi, \eta ) : \mathcal{F}_{\ast}(\Psi, \eta ) = s\}
 $$
for the rate function. The equality is clearly satisfied when $ s = \mathcal{F}_{\ast}(\Psi, \mu_{\Phi}) $ by uniqueness
of the equilibrium state and Proposition \ref{admiss2}. Hence we are reduced to the case where $ s \neq \mathcal{F}_{\ast}(\Psi, \mu_{\Phi}) $.
From the additive case we already know that for all
$
s\in \big(\inf_{t \in \R}\mathcal{F}_{\ast}(\Psi , \mu_{\Phi + t \Psi})
    \;,\; \sup_{t \in \R}\mathcal{F}_{\ast}(\Psi , \mu_{\Phi + t \Psi}) \big)
$
$$
I_{f,\varphi_{\vep},g_{\vep}}(s)
    = \inf_{\eta \in \mathcal{M}_{1}(f)}
    \big\{P_{\topp}(f, \varphi_{\vep}) - h_{\eta}(f) - \int \varphi_{\vep} d\eta : \int g_{\vep} d\eta = s\big\}
$$
and for every small $\vep$. We will use an auxiliary lemma.

\begin{lemma}
For every $s$ in the interior of $J := \{ \mathcal{F}_{\ast}(\Psi , \eta)  : \eta \in \mathcal{M}_{1}(f) \}$,
$$
\lim_{\vep \to 0}\sup_{\eta \in \mathcal{M}_{1}(f)} \{h_{\eta}(f) + \int \varphi_{\vep} d\eta : \int g_{\vep} d\eta \!=\! s\}
=
\!\!\!\sup_{\eta \in \mathcal{M}_{1}(f)} \!\{h_{\eta}(f) + \mathcal{F}_{\ast}(\Phi, \eta ) : \mathcal{F}_{\ast}(\Psi, \eta ) = s\}.
$$
\end{lemma}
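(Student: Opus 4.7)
The strategy is to prove the two inequalities ($\le$ and $\ge$) separately, leveraging three ingredients: the continuity of $(\mu,\Psi)\mapsto\mathcal{F}_{\ast}(\Psi,\mu)$ from Proposition~\ref{admiss1}(ii), the upper semicontinuity of $\eta\mapsto h_\eta(f)$ on the repeller (Proposition~\ref{admiss2}), and the fact that the admissible family $\{g_\vep\}_{\vep}$ verifies $\|\{S_n g_\vep\}_n - \Psi\|_{\mathbb{A}}<\vep$, with the analogous statement for $\Phi$. In particular, $\mathcal{F}_{\ast}(\{S_n g_\vep\}_n,\eta)=\int g_\vep\,d\eta$ for every $f$-invariant $\eta$.

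For the inequality $\le$, fix a small $\vep>0$. The constraint set $\{\eta\in\mathcal{M}_1(f):\int g_\vep\,d\eta=s\}$ is weak$^*$-closed in the compact space $\mathcal{M}_1(f)$; since $s$ lies in the interior of $J=\{\mathcal{F}_{\ast}(\Psi,\tau):\tau\in\mathcal{M}_1(f)\}$, the range of $\eta\mapsto\int g_\vep\,d\eta$ is an interval that contains $s$ for all sufficiently small $\vep$, so this constraint set is nonempty. Together with upper semicontinuity of the entropy and weak$^*$-continuity of $\eta\mapsto\int\varphi_\vep\,d\eta$, this yields a maximizer $\eta_\vep$. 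Extract a weak$^*$-convergent subsequence $\eta_{\vep_k}\to\eta_0$. By Proposition~\ref{admiss1}(ii),
$$
\int g_{\vep_k}\,d\eta_{\vep_k} \;=\; \mathcal{F}_{\ast}(\{S_n g_{\vep_k}\}_n,\eta_{\vep_k}) \;\longrightarrow\; \mathcal{F}_{\ast}(\Psi,\eta_0),
$$
so $\mathcal{F}_{\ast}(\Psi,\eta_0)=s$, and similarly $\int\varphi_{\vep_k}\,d\eta_{\vep_k}\to\mathcal{F}_{\ast}(\Phi,\eta_0)$. Combining with $\limsup_k h_{\eta_{\vep_k}}(f)\le h_{\eta_0}(f)$ produces the upper bound.

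For the inequality $\ge$, the obstacle is that a measure $\eta\in\mathcal{M}_1(f)$ that is nearly optimal on the right-hand side (with $\mathcal{F}_{\ast}(\Psi,\eta)=s$) generally satisfies $\int g_\vep\,d\eta\ne s$, so it is not admissible for the $\vep$-level supremum and must be perturbed. Since $s$ lies in the interior of $J$, fix $\eta_+,\eta_-\in\mathcal{M}_1(f)$ with $\mathcal{F}_{\ast}(\Psi,\eta_\pm)=s\pm\delta$ for some $\delta>0$. Continuity gives $\int g_\vep\,d\eta\to s$ and $\int g_\vep\,d\eta_\pm\to s\pm\delta$, so for every small $\vep$ a unique $\lambda_\vep\in[0,1]$ with $\lambda_\vep\to 0$ exists so that
$$
\eta_\vep\;:=\;(1-\lambda_\vep)\,\eta + \lambda_\vep\,\eta_{\pm},
$$
with the sign chosen opposite to that of $\int g_\vep\,d\eta - s$, satisfies $\int g_\vep\,d\eta_\vep=s$. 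Affinity of the entropy and linearity of integration then give
$$
h_{\eta_\vep}(f)+\int\varphi_\vep\,d\eta_\vep \;\longrightarrow\; h_\eta(f)+\mathcal{F}_{\ast}(\Phi,\eta),
$$
because the terms containing $\eta_\pm$ carry the factor $\lambda_\vep\to 0$ and entropies are uniformly bounded by $h_{\topp}(f)<\infty$. Taking the supremum over $\eta$ finishes the argument.

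The main difficulty is precisely this perturbation step in the $\ge$ direction: it relies essentially on the interiority of $s$ in $J$ to produce auxiliary measures on both sides of $s$, on the affinity of the entropy to control the perturbation cost, and on uniform boundedness of the relevant entropies. The $\le$ direction is, by contrast, a fairly direct compactness-and-semicontinuity argument once the joint continuity of $(\mu,\Psi)\mapsto\mathcal{F}_{\ast}(\Psi,\mu)$ on $\mathcal{M}_1(f)\times\mathbb{A}$ is available.
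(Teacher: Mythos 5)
Your proof is correct, and while your $\le$ direction is essentially the same compactness--plus--semicontinuity argument as in the paper (extract a maximizer $\eta_\vep$ for the $\vep$-level supremum, pass to a weak$^*$-convergent subsequence, and invoke Proposition~\ref{admiss1} together with upper semicontinuity of entropy), your $\ge$ direction is genuinely different from the paper's. The paper relaxes the equality constraint to the interval constraint $\int g_\vep\,d\eta\in(s-\delta,s+\delta)$, invokes Young's variational characterization of the finite-$\vep$ rate function $I_{f,\varphi_\vep,g_\vep}$ to rewrite the relaxed supremum as $P_{\topp}(f,\varphi_\vep)-\inf_{t\in(s-\delta,s+\delta)}I_{f,\varphi_\vep,g_\vep}(t)$, passes to the limit in $\vep$ using convexity of the Legendre transforms, and only then sends $\delta\to 0$ using continuity of $I_{f,\Phi,\Psi}$. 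You instead work at the level of measures: you take any $\eta$ with $\mathcal{F}_*(\Psi,\eta)=s$, exploit the interiority of $s$ in $J$ to find reference measures $\eta_\pm$ with $\mathcal{F}_*(\Psi,\eta_\pm)=s\pm\delta$ straddling $s$, and correct the violated constraint $\int g_\vep\,d\eta\ne s$ by a small convex combination $\eta_\vep=(1-\lambda_\vep)\eta+\lambda_\vep\eta_\pm$ with $\lambda_\vep\to 0$, using affinity of entropy and uniform boundedness to control the cost. Your route is more elementary and self-contained: it stays inside $\mathcal{M}_1(f)$ and never touches the rate-function formalism, which is attractive since the lemma is itself used to establish item (iii) of Theorem~\ref{legtrans} (the variational formula for $I_{f,\Phi,\Psi}$). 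The paper's route, by contrast, showcases that $I_{f,\Phi,\Psi}$ already carries enough structure (convexity, continuity) to be used as a tool here, and sidesteps the need to produce the auxiliary measures $\eta_\pm$ explicitly. Both arguments rely on $s$ lying in the interior of $J$; the paper also makes a cosmetic WLOG shift $g_\vep\mapsto g_\vep+s-\int g_\vep\,d\eta_1$ to ensure the constraint set is nonempty, which your argument avoids by proving nonemptiness directly via $\eta_\pm$ and convexity of the range of $\eta\mapsto\int g_\vep\,d\eta$.
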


\begin{proof}
We will use the continuity of $\cF_*(\Phi,\mu)$ in both coordinates. Let $s \in J$ be fixed
and {\color{black} take $\eta_{1} \in \mathcal{M}_{1}(f)$ with $s=\mathcal{F}_{\ast}(\Psi, {\eta_{1}})$.}
Consider an admissible family $(g_{\vep})_\vep$ for $\Psi$ not cohomologous a to constant. We may assume
without loss of generality that \textcolor{black}{$\int g_{\vep}d\eta_1 = s$} for $\vep$ small (otherwise just use the
admissible family $(\tilde g_\vep)_\vep$ given by \textcolor{black}{$\tilde g_\vep:=g_\vep + s- \int g_\vep \,d\eta_1$} which is also
not cohomologous to a constant). In particular,
$$
\big\{
\eta \in \mathcal{M}_{1}(f) \colon \int g_\vep \;d\eta =s
    \text{ for all $\vep$ small }
\big\}
$$
is a closed, non-empty set in $\mathcal{M}_{1}(f) $, hence compact. Using the compactness and
upper semi-continuity of the metric entropy function there exists $\eta_{\vep} \in \mathcal{M}_{1}(f)$ such that
$\int g_{\vep}d\eta_{\vep} = s$ and
$$
h_{\eta_{\vep}}(f) + \int \varphi_{\vep} d\eta_{\vep}
    =  \sup_{\eta \in \mathcal{M}_{1}(f)} \{h_{\eta}(f) + \int \varphi_{\vep} d\eta : \int g_{\vep} d\eta = s\}.
$$
Take $\tilde{\eta} \in \mathcal{M}_{1}(f)$ be an accumulation point of $(\eta_{\vep})_\vep$ and assume for
simplicity that $\eta_\vep \to \tilde\eta$ as $\vep$ tends to zero. Then Proposition~\ref{admiss1} yields that
$\lim_{\vep \to 0}\int\varphi_{\vep} d\eta_{\vep} = \mathcal{F}_{\ast}( \Phi, \tilde{\eta} )$ and
$\lim_{\vep \to 0}\int g_{\vep} d\eta_{\vep} = \mathcal{F}_{\ast}(\Psi, \tilde{\eta} ) = s$.
Using once more the upper semicontinuity of the metric entropy function
\begin{align*}
\lim_{\vep \to 0}\sup_{\eta \in \mathcal{M}_{1}(f)} \{h_{\eta}(f) + \int \varphi_{\vep} d\eta : \int g_{\vep} d\eta = s\}
                 & = \lim_{\vep \to 0} \big\{ h_{\eta_{\vep}}(f) + \int \varphi_{\vep} d\eta_{\vep} \big\}\\
                 & \leq h_{\tilde{\eta}}(f) + \mathcal{F}_{\ast}(\Phi, \tilde{\eta}) \\
                 & \leq \sup_{\eta \in \mathcal{M}_{1}(f)} \{h_{\eta}(f) + \mathcal{F}_{\ast}(\Phi,\eta) :
                 \mathcal{F}_{\ast}( \Psi, \eta) = s\}.
\end{align*}

To prove the other inequality, let $\tilde{\eta} \in \mathcal{M}_{1}(f)$ be that attains the supremum in the right hand side above,
that is, so that $s=\mathcal{F}_{\ast}(\Psi, \tilde{\eta})$ and
\begin{align*}
\sup_{\eta \in \mathcal{M}_{1}(f)} \{h_{\eta}(f) + \mathcal{F}_{\ast}(\Phi, \eta) : \mathcal{F}_{\ast}(\Psi, \eta) = s\}
    & = h_{\tilde{\eta}}(f) + \mathcal{F}_{\ast}(\Phi,\tilde{\eta} )
%   & = \lim_{\vep \to 0} \; (h_{\tilde{\eta}}(f) + \int \varphi_{\vep} d\tilde{\eta} )
\end{align*}
Let $\delta>0$ be fixed and arbitrary. By Proposition \ref{admiss1} there exists $\vep_{\delta} > 0$ such that
$\int g_{\vep} d\tilde{\eta} \in (s - \delta , s + \delta)$ for all  $0<\vep < \vep_{\delta}$. In particular, using the characterization of rate function $I_{f,\varphi_{\vep},g_{\vep}}(\cdot)$ given by \cite{Young}
\begin{align*}
 h_{\tilde{\eta}}(f) + \int \varphi_{\vep} d\tilde{\eta}
              & \leq \sup_{\eta \in \mathcal{M}_{1}(f)} \{h_{\eta}(f) + \int \varphi_{\vep} d\eta : \int g_{\vep} d\eta \in (s -\delta , s +\delta)\} \\
              & = -\inf_{t \in (s-\delta , s +\delta)} I_{f,\varphi_{\vep},g_{\vep}}(t) + P_{\topp}(f , \varphi_{\vep}) \\
\end{align*}
for every  $0<\vep < \vep_{\delta}$. Taking the limit as $\vep\to0$ in both sides of the inequality
and using the convexity of the Legendre transform
\begin{align*}
\sup_{\eta \in \mathcal{M}_{1}(f)} \{h_{\eta}(f) + \mathcal{F}_{\ast}(\Phi, \eta ) : \mathcal{F}_{\ast}( \Psi,\eta ) = s\}
                                   & = \lim_{\vep \to 0} ( h_{\tilde{\eta}}(f) + \int \varphi_{\vep} d\tilde{\eta} ) \\
              & \leq \lim_{\vep \to 0} \;
                (P_{\topp}(f , \varphi_{\vep}) -\inf_{t \in (s-\delta , s +\delta)} I_{f,\varphi_{\vep},g_{\vep}}(t))  \\
              & = P_{\topp}(f , \Phi) -\min\{I_{f,\Phi,\Psi}(c-\delta) , I_{f,\Phi,\Psi}(c + \delta)\}
\end{align*}
Since the rate function is continuous, taking $\delta$ tend to zero it follows
\begin{align*}
\sup_{\eta \in \mathcal{M}_{1}(f)} \{h_{\eta}(f) + & \mathcal{F}_{\ast}(\Phi, \eta ) : \mathcal{F}_{\ast}( \Psi, \eta ) =s\} \\
                                                    & \leq  - I_{f,\Phi,\Psi}(c) + P_{\topp}(f , \Phi)\\
                                                    & = \lim_{\vep \to 0} \{-I_{f,\varphi_{\vep},g_{\vep}}(s) + P_{\topp}(f , \varphi_{\vep})\} \\
                                               & = \lim_{\vep \to 0} \sup_{\eta \in \mathcal{M}_{1}(f)} \{h_{\eta}(f) + \int \varphi_{\vep} d\eta : \int g_{\vep} d\eta = s\}.
\end{align*}
This finishes the proof of the lemma.
\end{proof}
Now, item (iii) is just a consequence of the previous lemma together with the fact
that $I_{f , \Phi, \Psi}(s) := \lim_{\vep \to 0}I_{f,\varphi_{\vep},g_{\vep}}(s)$.

We are left to prove property (iv). It follows from item (iii) that $I_{f,\Phi,\Psi}(s)=0$ if and only if
$s=\mathcal{F}_{\ast}(\Psi,\mu_{\Phi} )$. It remains to prove that  $I_{f,\Phi,\Psi}$ is strictly convex in a small neighborhood
of $\mathcal{F}_{\ast}(\Psi,\mu_{\Phi} )$. {\color{black} The proof is by contradiction and assuming $\mathcal{F}_{\ast}(\Psi,\mu_{\Phi} )=0$ below causes no
loss of generality and simplifies the notation.
If this was not the case, and using that $I_{f,\Phi,\Psi}$ is convex, it is not
hard to check that either:
(a) there exists an open interval around $0$ where $I_{f,\Phi,\Psi}$ is constant, or
(b) there exists $c\in \mathbb R$ and an open interval $J$ with $0$ as an endpoint so that
	$I_{f,\Phi,\Psi} (s)= c s $ for every $s\in J$ (i.e. the function is affine).
Since $I_{f,\Phi,\Psi}(s) = 0$ if and only if $s =0$ then
case (a) clearly contradicts the uniqueness of the equilibrium state $\mu_{\Phi}$.
 In case (b) it follows from \eqref{eq:variatA} that
%recall   $$
 % I_{f,\Phi,\Psi}(\mathcal{E}'_{f,\Phi,\Psi}(t)) = t \mathcal{E}'_{f,\Phi,\Psi}(t) - \mathcal{E}_{f,\Phi,\Psi}(t),
 % $$
$$
c \,  \mathcal{E}'_{f,\Phi,\Psi}(t)
	= t \mathcal{E}'_{f,\Phi,\Psi}(t) - \mathcal{E}_{f,\Phi,\Psi}(t),
$$
for every $t\in \mathbb R$ with $\mathcal{E}'_{f,\Phi,\Psi}(t) \in J $. Then, the
(unique) solution of the previous non-autonomous linear differential equation is $\mathcal{E}_{f,\Phi,\Psi}(t) = a( t-c)$ for some
$a\in \mathbb R$.
If $a\ne 0$ then \eqref{eq:defEt} implies that $t\mapsto P_{\topp}(f , \Phi + t\Psi)$ is affine for an open interval $J' \subset \mathbb R$
having $0$ as an endpoint. Since for every $f$-invariant probability measure $\eta$, the line $q\mapsto h_{\eta}
+ \mathcal{F}_*(\Phi + q \Psi,\eta)$ is a subdiferential for topological pressure, the semi-continuity of the entropy function
together with the later expression implies that any accumulation point $\mu$ for $\mu_{\Phi+t \Psi}$ as $t\to 0$ is an
equilibrium state for $f$ with respect to $\Phi$ which satisfies $\mathcal{F}_*(\Psi, \mu)= a \neq 0 = \mathcal{F}_*(\Psi, \mu_\Phi)$ which contradicts uniqueness of equilibrium states.
If $a=0$ then there exists an open interval $J'$ so that the unique equilibrium state for $f$ with respect
to $\Phi+t\Psi$ is the same for all $t\in J'$. Then it follows from \cite{YZhou}
(following the ideas from \cite[Proposition~4.5]{BowenLNM}) that the sequences
$\Phi$ and $\Phi+t\Psi$ of almost-additive observables are cohomologous for every $t\in J'$,
which implies that $\Psi$ is cohomologous to a constant. Since the later cannot occur, since we assume
that $\Psi$ is not cohomologous to a constant, this completes the proof of (iv).
}
This finishes the proof of Theorem~\ref{legtrans}.

%%%%%%%%%%%%%%%%%%%%%%%%%%%%%
\section{Multifractal analysis of irregular sets}\label{sec:frac}

This section is devoted to the proof of our multifractal analysis results.

%%%%%%%%%%%%%%%%%%%%%%%%%%%%%
\subsection{Proof of Theorem~\ref{thm:B}}

Let $M$ be a compact metric space, $f: M\to M$ be a continuous map, $\Phi = \{\phi_{n}\}$ be an almost additive sequence of potentials with $P_{\topp}(f,\Phi) > -\infty$. By assumption, the unique equilibrium state $\mu_\Phi$
of $f$ with respect to $\Phi$ 
is a weak Gibbs measure.
Given $J\subset \mathbb R$ and $n\ge 1$ set  $X_{J,n}=\{x \in M : \frac1n \psi_{n}(x) \in J\}$.

\begin{lemma}\label{lemma:aux3}
Assume that $\Psi = \{\psi_{n}\}$ is  a sequence of observables  that satisfies at least one of the following
properties:
\begin{itemize}
\item[(a)] $\Psi$ is asymptotically additive or;
\item[(b)] $\Psi$ is a subadditive sequence such that
        \begin{itemize}
        \item[i.] it satisfies the weak Bowen condition;
        \item[ii.] $\inf_{n \geq 1}\frac{\psi_{n}(x)}{n} > -\infty$ for every $x \in M$; and
        \item[iii.] the sequence $\{\frac{\psi_{n}}{n}\}$ is equicontinuous.
        \end{itemize}
\end{itemize}
Then $ \Psi $ satisfies the tempered distortion condition.
In particular, given $J \subset \mathbb R$ be a closed set and $\delta> 0$ there exists $\vep_\delta>0$ such that if $0< \vep < \vep_\delta$
then there exists $N=N_{\delta,\epsilon} \in \N$ so that
$B(x ,n,  \vep) \subset X_{J_{\delta}, n}$ for all $n \geq N$ and every $x \in X_{ J, n}$.
\end{lemma}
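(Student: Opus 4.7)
The plan is to reduce everything to the tempered distortion condition~\eqref{eq:tempered}, from which the inclusion $B(x,n,\vep) \subset X_{J_{\delta},n}$ follows by a direct triangle-inequality estimate.

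I would first verify tempered distortion in both cases. Case (b) is essentially built into the hypotheses: the weak Bowen condition~\eqref{eweakBowen} supplies some $\delta_{0} > 0$ with $\lim_{n \to \infty}\gamma_{n}(\Psi,\delta_{0})/n = 0$, and since $\gamma_{n}(\Psi,\cdot)$ is monotone in its radius, the same limit holds for every $\vep \in (0,\delta_{0})$; hence $\lim_{\vep \to 0}\lim_{n}\gamma_{n}(\Psi,\vep)/n = 0$. (Note that conditions (b)(ii) and (b)(iii) play no role in this step and must be needed for later arguments in the paper.) For case (a), I would fix $\zeta > 0$, set $\xi = \zeta/4$, and use asymptotic additivity to select a continuous $\varphi_{\xi}$ satisfying $\limsup_{n}\tfrac{1}{n}\|\psi_{n} - S_{n}\varphi_{\xi}\|_{\infty} < \xi$. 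Uniform continuity of $\varphi_{\xi}$ on the compact metric space $M$ furnishes $\delta_{0} = \delta_{0}(\zeta) > 0$ such that $|\varphi_{\xi}(a) - \varphi_{\xi}(b)| < \zeta/4$ whenever $d(a,b) < 2\delta_{0}$. For $\vep < \delta_{0}$ and $y,z \in B(x,n,\vep)$ one has $d(f^{j}y, f^{j}z) \leq 2\vep < 2\delta_{0}$ for $0 \leq j < n$, so $|S_{n}\varphi_{\xi}(y) - S_{n}\varphi_{\xi}(z)| < n\zeta/4$, and the triangle inequality then gives
\begin{equation*}
|\psi_{n}(y) - \psi_{n}(z)|
    \leq 2\|\psi_{n} - S_{n}\varphi_{\xi}\|_{\infty} + |S_{n}\varphi_{\xi}(y) - S_{n}\varphi_{\xi}(z)|
    < 2n\xi + n\zeta/4 = \tfrac{3}{4}n\zeta
\end{equation*}
for all $n$ sufficiently large. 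Taking the supremum over $x$ and over such $y,z$ yields $\limsup_{n}\gamma_{n}(\Psi,\vep)/n \leq 3\zeta/4$ for every $\vep < \delta_{0}(\zeta)$; since $\zeta$ is arbitrary, the tempered distortion condition follows.

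With~\eqref{eq:tempered} established, the ``in particular'' statement is nearly immediate. Given closed $J \subset \R$ and $\delta > 0$, choose $\vep_{\delta} > 0$ so that $\limsup_{n}\gamma_{n}(\Psi,\vep)/n < \delta$ for every $\vep \in (0,\vep_{\delta})$, and for such a $\vep$ take $N = N_{\delta,\vep}$ with $\gamma_{n}(\Psi,\vep)/n < \delta$ for all $n \geq N$. Then for any $x \in X_{J,n}$ and $y \in B(x,n,\vep)$ one has
\begin{equation*}
\Big|\tfrac{1}{n}\psi_{n}(y) - \tfrac{1}{n}\psi_{n}(x)\Big|
    \leq \tfrac{1}{n}\gamma_{n}(\Psi,\vep) < \delta,
\end{equation*}
and since $\tfrac{1}{n}\psi_{n}(x) \in J$ this places $\tfrac{1}{n}\psi_{n}(y)$ inside the $\delta$-neighborhood $J_{\delta}$, whence $y \in X_{J_{\delta},n}$.

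The only delicate point is the bookkeeping of the two error sources in case (a): the asymptotic approximation error of order $\xi n$, which does not vanish after dividing by $n$, and the uniform continuity error of order $(\zeta/4)n$. The key is to fix $\xi$ as a definite fraction of the target precision $\zeta$ \emph{before} invoking uniform continuity of $\varphi_{\xi}$, rather than trying to let $\xi \to 0$ after the fact; once the order of quantifiers is right, the estimate collapses into a single application of the triangle inequality and case (a) becomes as transparent as case (b).
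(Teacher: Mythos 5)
Your proof is correct and follows essentially the same route as the paper: reduce everything to the tempered distortion condition and then finish with the triangle-inequality estimate on dynamical balls. The only difference is that where the paper simply cites \cite[Lemma~2.1]{ZZC11} for the tempered distortion of asymptotically additive sequences and calls the subadditive case ``clear,'' you supply the full uniform-continuity argument; your remark that hypotheses (b)(ii)--(b)(iii) are not used in this particular lemma is also accurate.
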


\begin{proof}
The tempered distortion condition is clear for sequences of observables satisfying the weak Bowen condition and also holds for asymptotically additive sequences (c.f. \cite[Lemma~2.1]{ZZC11}).

Let us prove now the second part of the lemma. Given $\delta>0$, by the tempered distortion condition there is $\vep_\delta>0$ such that
$\lim_{n \to \infty}\gamma_{n}(\psi,\vep) < \delta n$ for all
$0 <\vep <\vep_\delta$. So, given $0 <\vep <\vep_\delta$
there exists a large $N=N_{\delta,\vep} \in \N$ such that if
$n \geq N$ we have $\gamma_{n}(\psi,\vep) \leq \delta n$. So, if $0<\vep<\vep_\delta$, $n \geq N$ and $x \in X_{ J, n}$, $y \in B(x ,n, \vep)$ then
$$
 \frac{\psi_{n}(x)}{n} -\frac{\gamma_{n}(\psi,\vep)}{n}
    \leq \frac{\psi_{n}(y)}{n}
    \leq  \frac{\psi_{n}(x)}{n} + \frac{\gamma_{n}(\psi,\vep)}{n}
$$
and, consequently,
$$
\frac{\psi_{n}(x)}{n} -\delta
    \leq \frac{\psi_{n}(y)}{n}
    \leq \frac{\psi_{n}(x)}{n} + \delta
$$
meaning that $y \in X_{J_{\delta}, n}$.
This finishes the proof of the lemma.
\end{proof}

We can now proceed with the proof of Theorem~\ref{thm:B}.
Assuming that $\overline{X}_{J}  \neq \emptyset$, we shall prove that
$P_{\overline{X}_{J}}(f , \Phi) \leq P_{\topp}(f,\Phi) - L_{J_{\delta},\mu_\Phi}$.
If $L_{J_{\delta},\mu_\Phi}=0$ there is nothing to prove so we assume without loss of generality that
$L_{J_{\delta},\mu_\Phi}>0$.
For our purpose it is enough to prove that for every $\alpha > P_{\topp}(f,\Phi) - L_{J_{\delta},\mu_\Phi}$,
given $\epsilon > 0$ and $N \in \N$ there exists
$\mathcal{G}_N \subset \bigcup_{n \geq N}\mathcal{I}_n$ satisfying the covering property $\ds\bigcup_{(x,n)\in \mathcal{G}_N} B(x,n,\epsilon) \supset \overline{X}_{I} $ and also
$
\ds\sum_{(x , n) \in \mathcal{G}_N} e^{-\alpha n +\phi_{n}(x)} \leq a(\epsilon)< \infty.
$
Let $\alpha > P_{\topp}(f,\Phi) - L_{J_{\delta},\mu_\Phi} $ and $0<\vep<\vep_\delta$ fixed, we take
 $\zeta >0$ small so that $\alpha > P_{\topp}(f,\Phi) - L_{J_{\delta},\mu_\Phi} +\zeta$.
{\color{black}Since $\mu_\Phi$ is a weak Gibbs measure,}
there exists $N_{0} \geq N_{\delta,\epsilon}$ such that $K_{n}(\vep) \leq e^{\frac{\zeta}{4}n}$,
$K_{n}(\frac{\vep}{2}) \leq e^{\frac{\zeta}{4}n}$ and
\begin{equation*}
\mu_\Phi \Big(\{x \in M : \frac{1}{n}\psi_{n}(x) \in J_{\delta} \}\Big)
    \leq e^{-(L_{J_{\delta},\mu_\Phi}-\frac{\zeta}{2}) n}
\end{equation*}
for all $n\ge N_{0}$. There is no loss of generality in supposing that $N \geq N_{0}$.
Given $N \geq N_{0}$ and $x \in \overline{X}_{J}$ take $m(x) \geq N$ so that $x \in X_{J_{\frac{\delta}{2}}, m(x)}$ and consider
$\mathcal{G}_{N} :=\{(x,m(x)): x \in \overline{X}_{J}\}$. Now, let $\hat{\mathcal{G}}_N \subset \mathcal{G}_N$ be a
maximal $(\ell,\vep)$-separated set. In particular if $(x, \ell)$ and $(y,\ell)$ belong the
$\hat{\mathcal{G}}_N$ then $B(x,\ell,\frac{\vep}{2}) \cap B(x, \ell, \frac{\vep}{2}) = \emptyset$.
Hence, for $0 < \vep < \vep_\delta$ given by Lemma~\ref{lemma:aux3}, using the Gibbs property of $\mu_\Phi$
we deduce that
\begin{align*}
\sum_{(x,m(x))\in \hat{\mathcal{G}}_N}   e^{-\alpha m(x) +\phi_{m(x)}(x)}
&=\sum_{(x,m(x))\in \hat{\mathcal{G}}_N}    e^{(P-\alpha)m(x)}e^{-P m(x) +\phi_{m(x)}(x)}\\
&\leq \sum_{(x,m(x))\in \hat{\mathcal{G}}_N}   e^{(P-\alpha)m(x)}K_{m(x)}(\vep) \, \mu_\Phi(B(x,m(x),\vep))
\end{align*}
Now, we write $\hat{\mathcal{G}}_N=\cup_{\ell\ge 1} \hat{\mathcal{G}}_{\ell,N}$ with the level sets
$\hat{\mathcal{G}}_{\ell,N}:=\{(x,\ell)\in \hat{\mathcal{G}}_{N}\}$.
By Lemma~\ref{lemma:aux3} each dynamical ball $B(x,\ell,\vep)$ is contained in
$X_{I_{\delta},\ell}$. Thereby, using that $\mu_\Phi(B(x,m(x),\vep)) \leq K_{m(x)}(\vep) K_{m(x)}(\vep/2)
\mu_\Phi(B(x,m(x),\vep/2)$
then
 \begin{align*}
\sum_{(x,m(x))\in \hat{\mathcal{G}}_N}   e^{-\alpha m(x) +\phi_{m(x)}(x)}
        & \leq  \sum_{(x,m(x))\in \hat{\mathcal{G}}_N}K_{m(x)}(\vep) e^{(P-\alpha)(m(x))} \mu_\Phi(B(x,m(x),\vep)) \\
        & = \sum_{\ell \geq N}K_{\ell}(\vep)e^{(P-\alpha)\ell}
            \sum_{x \in \hat{\mathcal{G}}_{N,\ell}}  \mu_\Phi(B(x,\ell,\vep)) \\
        & \leq
      \sum_{\ell\geq N}K_{\ell}(\vep)K_{\ell}(\frac{\vep}{2})e^{(P-\alpha)\ell}
      \sum_{x \in \hat{\mathcal{G}}_{N,\ell}}  \mu_\Phi(B(x,\ell,\vep/2)) \\
      & \leq
      \sum_{\ell\geq N}K_{\ell}(\vep)K_{\ell}(\frac{\vep}{2})e^{(P-\alpha) \ell} \, \mu_\Phi(X_{J_\delta,\ell}) \\
      & \leq
      \sum_{\ell\geq N}e^{(P-\alpha-L_{J_{\delta},\mu_\Phi}+\zeta) \ell}
\end{align*}
that is finite and independent of the choice of $N$. This proves that for any closed interval $J \subset \R$ and any small $\delta > 0$ it follows that
$
P_{\underline{X}_{J}}(f , \Phi) \leq P_{\overline{X}_{J}}(f , \Phi)
    \leq P_{\topp}(f , \Phi) - L_{J_{\delta},\mu_\Phi}
    \leq P(f , \Phi),
$
 proving the theorem.

\begin{remark}\label{veryweak}
Let us mention that the argument of Theorem \ref{thm:B} proving that for any closed interval
$J \subset \R$ and any small $\delta > 0$,
\begin{equation}\label{eq:veryweakest}
P_{\underline{X}_{J}}(f , \Phi) \leq  P_{\topp}(f , \Phi) -L_{J_{\delta},\mu_\Phi} \leq P(f , \Phi)
\end{equation}
carries under the weaker Gibbs condition ~\eqref{eq:veryweak}.
Taking into account the difficulty that the moments where the Gibbs property occurs may depend on the point
justifies the fact that the estimate \eqref{eq:veryweakest} holds for the set $\underline{X}_{J}$.
Since the proof of  this fact is similar to the the one of Theorem \ref{thm:B} we give only a sketch of proof with main ingredients. In fact, by ~\eqref{eq:veryweak} there is $ \vep_0 >  0$ such that:
for all $0<\vep<\vep_0$ there exists $K_{n}(\vep)>0$ such that for $\mu_\Phi$-a.e. point $x$ there exists
a sequence $n_k(x)\to \infty$ with
$$
K_{n_k(x)}(\vep)^{-1}
    \leq \frac{\mu_\Phi(B(x,n_{k}(x),\vep))}{e^{-n_{k}(x)P + S_{n_{k}(x)}\phi(x)}}
    \leq K_{n_k(x)}(\vep).
$$
Using $
\underline{X}_{J}\subset \bigcup_{\ell\ge 1} \bigcap_{j\ge \ell} X_{J_{\delta},j}
$
where $X_{J,n}=\{x \in M : \frac{1}{n}S_{n}\psi(x) \in J \}$ it is not difficulty check that for all $x \in \underline{X}_{J}$ there is a sequence of positive numbers $(m_j(x))_{j \in \N}$ converging to infinity such that
$x \in X_{J_{\frac{\delta}{2}},m_j(x)}$ and $m_j(x)$ is a moment where the Gibbs property holds.
Consider $\delta,\zeta>0$ arbitrary small,  $\alpha > P_{\topp}(f,\Phi) - L_{J_{\delta},\mu_\Phi} +\zeta$,
$\vep > 0$ small
and $N \in \N$ large. Take $m(x)\geq N$ so that $x\in X_{J_{\frac{\delta}{2}},m(x)}$, the constants satisfy
$K_{m(x)}(\vep) \leq e^{\frac{\zeta}{4}m(x)}$,
$K_{m(x)}(\frac{\vep}{2}) \leq e^{\frac{\zeta}{4}m(x)}$, and
\begin{equation*}
\mu_\Phi \Big(\{x \in M : \frac{1}{m(x)}\psi_{m(x)}(x) \in J_{\delta} \}\Big)
    \leq e^{-(L_{J_{\delta},\mu_\Phi}-\frac{\zeta}{2}) m(x)}.
\end{equation*}
Setting $\mathcal{G}_N:=\{(x,m(x)):x \in \underline{X}_{J}\}$ we
prove the result just follow with the same estimates
used in the proof of Theorem~\ref{thm:B} and obtain that $P_{\underline{X}_{J} }(f , \Phi) \leq P_{\topp}(f , \Phi) - L_{J_{\delta},\mu_\Phi}$ as claimed.
\end{remark}

%%%%%%%%%%%%%%%%%%%%
\subsection{Proof of Corollary \ref{nonadtreg}}

By \cite{Barreira} and \cite{Mummert}, since $\Phi=0$ clearly satisfies the bounded distortion condition $\mu_{0}$
is a Gibbs measure. So Theorem \ref{thm:B} implies that $h_{\overline{X}_{J} }(f ) \leq h_{\topp}(f ) - L_{J_{\delta},\mu_{0}},$ for all $\delta > 0$ sufficiently small. By the large deviations estimates from \cite{Va13} and Theorem~\ref{legtrans} we have that
$$
h_{\overline{X}_{J} }(f ) \leq h_{\topp}(f) - \inf_{s \in
J_{\delta}}I_{f, 0,\Psi}(s)
$$
for all $\delta > 0$ small. The Legendre transform of $\Psi$ is continuous. Hence
$$
h_{\overline{X}_{J} }(f ) \leq h_{\topp}(f) - \inf_{s \in J} I_{f,
0,\Psi}(s)
$$
For the lower bound we proceed as follows, with an estimate similar to \cite[Theorem B]{BV14}.
It follows from Barreira and Doutor~\cite{BD09}  that if
$X(\alpha) \neq \emptyset$ then
$h_{X(\alpha)}(f ) = \sup_{\eta \in \mathcal{M}_{1}(f)}\{h_{\eta}(f) : \mathcal{F}_{\ast}(\Psi, \eta ) = \alpha\}$.
Thus, if  $\overline{X}_{J} \neq \emptyset$ and $\mathcal{F}_{\ast}(\Psi , \mu_{0}) \notin J$ then Theorem~\ref{legtrans} (item ii.) yields  that the infimum of $\ds\inf_{s \in J} I_{f,0,\Psi}(s)$ is realized at a boundary point $c_{\ast}$ of $J$.
Thus:
\begin{align*}
h_{\topp}(f ) - I_{f,0,\Psi}(c_{\ast})
    & =  h_{X(c_{\ast})} (f) \leq h_{X(J)}(f,)  \\
        & \leq h_{X(\overline{J})}(f) \leq h_{\underline{X}_{J}} (f)    \\
    & \leq  h_{\overline{X}_{J}} (f) \leq h_{\topp}(f) - I_{f,0 ,\Psi}(c_{\ast}).
\end{align*}
In particular, we prove we prove that for $J_c= \mathbb R \setminus (\mathcal{F}_{\ast}(\Psi , \mu_{0}) - c,
\mathcal{F}_{\ast}(\Psi , \mu_{0}) + c)$ we get $\overline{X_{J_c}}=\overline{X}_{\mu_{0}, \Psi, c}$ and so
$$
h_{\overline{X}_{\mu_{0}, \Psi, c}} (f)= h_{\topp} (f) - \min\{I_{f, 0, \Psi}\big(\mathcal{F}_{\ast}(\Psi , \mu_{0}) + c\big) \;,\;
I_{f, 0 , \Psi}\big(\mathcal{F}_{\ast}(\Psi , \mu_{0}) -c\big)\}
$$
whenever the set
$ \overline{X}_{\mu_{0}, \Psi, c}$ is not empty. So by Theorem~\ref{legtrans} we deduce that
the function $\R^+_0 \ni c\mapsto h_{\overline{X}_{\mu_{0},\Psi,c}}(f)$ is strictly decreasing
and concave in a neighborhood of zero.

%%%%%%%%%%%%%%%%%%%%%%%%%%%%%%%%%%%%%%%%%%%%%%%%%%%%%
\section{Examples and applications}\label{Examples}

In this section we provide some applications of the theory concerning the study of some classes
of non additive sequences of potentials related to either Lyapunov exponents or entropy.
{\color{black}As we already mentioned our results are also valid for subshifts of finite type, with exactly the same proof.}

%%%%%%%%%%%%%%%%%%%%%%%%%%%%%%%%%%%%%%%%%%%%%%%%%%%%%
\subsection{Linear cocycles}

Here we consider cocycles over subshifts of finite type as considered by Feng, Lau and K\"aenm\"aki~\cite{Feng1, Feng2}.  Let $\si:\Si \to \Si$ be the shift map on the space
$\Si=\{1,\dots, \ell\}^{\mathbb N}$ endowed with the distance $d(x,y)=2^{-n}$ where $x=(x_j)_j$,
$y=(y_j)_j$ and $n=\min \{j \ge 0 : x_j\neq y_j\}$.
Consider matrices $M_1, \dots, M_\ell \in \cM_{d\times d}(\mathbb C)$ such that
for every $n\ge 1$ there exists $i_1,\dots, i_n\in\{1,\dots, \ell\}$ so that the product matrix
$M_{i_1}\dots M_{i_n}\neq 0$. Then, the topological pressure function is well defined as
$
P(q)=\lim_{n\to\infty} \frac1n \log \sum_{\iota \in \Si_n} \|M_\iota\|^q
$
where $\Si_n=\{1,\dots, \ell\}^n$ and for any $\iota=(i_1, \dots, i_n) \in \Si_n$ one considers the matrix
$M_\iota = M_{i_n} \dots M_{i_2} M_{i_1}$. Moreover, for any $\si$-invariant probability measure $\mu$
define also the maximal Lyapunov exponent of $\mu$ by
\begin{equation*}\label{eq.LyapunovLau}
M_*(\mu)=\lim_{n\to\infty} \frac1n \sum_{\iota\in\Si_n} \mu([\iota]) \log \|M_\iota\|
\end{equation*}
and it holds that $P(q) = \sup\{ h_\mu(\si) + q \, M_*(\mu) : \mu
\in \cM_\si \}$. Notice that this is the variational principle for
the  potentials $\Phi=\{\varphi_n\}$ where
$\varphi_n(x)= q \log \|M_{\iota_n(x)}\|$ and for any $x\in \Si$ we set
${\iota_n(x)}\in\Si_n$ as the only symbol such that $x$ belongs to
the cylinder $[\iota_n(x)]$.
From \cite[Proposition~1.2]{Feng2}, if the set of matrices $\{M_1, \dots, M_d\}$ is irreducible over $\mathbb C^d$, (i.e. there is no non-trivial subspace $V\subset \mathbb C^d$ such that $M_i(V)\subset V$ for all $i=1,\dots, \ell$) there exists a unique equilibrium state $\mu_q$ for $\si$ with respect to
$\Phi$ and it is a Gibbs measure: there exists $C>0$ such that
\begin{equation*}\label{eq:GibbsLau}
\frac1C
    \leq \frac{\mu_q([\iota_n])}{e^{-n P(q)} \|M_{\iota_n} \|^q   }
    \leq C
\end{equation*}
for all $\iota_n\in \Si_n$ and $n\ge 1$.
Since the potentials $\varphi_n=\log \| M_{\iota_n(x)}\|$ are  constant in $n$-cylinders the family of potentials $\Phi$ clearly satisfies the bounded distortion condition.
It follows as a consequence of the large deviations bound in \cite{Va13} and Theorem~\ref{thm:B} that taking  $\Psi=\Phi$ with $q=1$ and $c>0$, the set
$$
\overline{X}_c=\{x\in \Sigma :  \limsup_{n\to\infty} \Big| \frac1n \log \|M_{\iota_n(x)}\| - M_*(\mu_\Phi) \Big|>c \}
$$
of points whose exponential growth of $M_{\iota_n(x)}$ is $c$-far away from the maximal Lyapunov exponent
$M_*(\mu_\Phi)$ for infinitely many values of $n$  has topological pressure strictly smaller than $\Ptop(f,\Phi)$.
Moreover, with respect to the maximal entropy measure $\mu_0$ Corollary~\ref{nonadtreg} yields that
the topological pressure function $c\mapsto h_{\overline{X}_c}(f) $ is strictly decreasing and concave
for small positive $c$.

%%%%%%%%%%%%%%%%%%%%%%%%%%%%%%%%%%%%%%%%%%%%%%%%%%%%%
\subsection{Non-conformal repellers}

The following class of local diffeomorphisms was introduced by Barreira and Gelfert~\cite{BG06}
in the study of multifractal analysis for Lyapunov exponents associated to non-conformal repellers.
Let $f:\mathbb{R}^2\rightarrow\mathbb{R}^2$ be
a $C^1$ local diffeomorphism, and let $J\subset \mathbb{R}^2$ be a
compact $f$-invariant set. Following \cite{BG06}, we say that $f$ satisfies the following {\em cone condition}
on $J$ if there exist a number $b\leq 1$ and for each $x\in J$ there is a one-dimensional subspace
$E(x)\subset T_x\mathbb{R}^2$ varying continuously with $x$ such that
$
Df(x)C_b(x)\subset \{0\}\cup\mathrm{int} \, C_b(fx)
$
where $C_b(x)=\{(u,v)\in E(x)\bigoplus E(x)^{\bot}:~||v||\leq
b||u||\}$.
It follows from \cite[Proposition~4]{BG06} that the latter condition implies that both  families of potentials
given by $\Psi_1=\{\log \sigma_1(Df^n(x))\}$ and $\Psi_2=\{\log \sigma_2(Df^n(x)) \}$ are almost additive,
where $\sigma_1(L)\geq \sigma_2(L)$ stands for the singular values of the linear transformation
$L:\mathbb{R}^2\rightarrow\mathbb{R}^2$, i.e., the eigenvalues of $(L^{*}L)^{1/2}$  with $L^*$
denoting the transpose of $L$.
Assume that $J$ is a locally maximal topological mixing repeller of $f$ such that:
\begin{itemize}
\item[(i)] $f$ satisfies the cone condition on $J$, and
\item[(ii)] $f$ has bounded distortion on $J$, i.e., there exists some $\delta>0$ such that
$$ \sup_{n\ge 1}\frac 1n \log \sup\Big\{ ||Df^n(y)(Df^n(z))^{-1}||:~x\in J~\text{and}~y,z\in B(x,n, \delta) \Big\}<\infty.$$
\end{itemize}
Then it follows from \cite[Theorem~9]{Barreira} that there exists a
unique equilibrium state $\mu_i$ for $(f,\Phi_i)$ which is a  
weak Gibbs measure 
with respect to the family of potentials $\Phi_i$, for $i=1,2$. Moreover,
from \cite[Example~4.6]{Va13},
for any $c>0$ the tail of the convergence to the largest or smallest Lyapunov exponent
(corresponding respectively to $j=1$ or $j=2$)
\begin{align*}
\mu_i
    \left(
    \left\{x\in M: \left|\frac 1 n \log \sigma_j(Df^n(x)) - \lim_{n\to\infty} \frac 1 n \int \log \sigma_j(Df^n(x)) d\mu_i
    \right|>c \right\}\right)
\end{align*}
decays exponentially fast as $n\to\infty$.
Moreover, it follows from Corollary~\ref{cor:LDP} that this exponential decay rate varies continuously with $c$.

One other consequence is that, although the irregular sets associated to $\Psi_j=\{\log \sigma_j(Df^n(x))\}$ have full topological pressure (using \cite{ZZC11} and the fact that $f$ has the specification property) the set of
irregular points whose time-$n$ Lyapunov exponents remain $c$-far away from the corresponding mean
have topological pressure strictly smaller than the topological pressure of the system.

%%%%%%%%%%%%%%%%%%%%%%%%%%%%%%%%%%%%%%%%%%%%%%%%%%%%%
\subsection{Entropy and Gibbs measures}

Let $\si:\Si \to \Si$ be the shift map on the space $\Si=\{1,\dots, \ell\}^{\mathbb N}$ endowed with the distance $d(x,y)=2^{-n}$ where $x=(x_j)_j$, $y=(y_j)_j$ and $n=\min \{j \ge 0 : x_j\neq y_j\}$. Set $\Si_n=\{1,\dots, \ell\}^n$
and for any $\iota=(i_1, \dots, i_n) \in \Si_n$ consider the $n$-cylinders
$[\iota]=\{x\in \Sigma : x_j=i_j, \; \forall 1 \le j \le n\}$.

Let $\Phi=\{\varphi_n\}$
be an almost additive sequence of potentials with the bounded distortion property and $\mu_\Phi$ be the
unique equilibrium state for $f$ with respect to $\Phi$ given by \cite{Barreira}. Fix $C>0$ so that
for every $x\in \Sigma$
$$
\varphi_{n}(x) + \varphi_{m}(f^n(x)) -  C
    \le \varphi_{m+n}(x)
    \le \varphi_{n}(x) + \varphi_{m}(f^n(x)) + C.
 $$
Since $\mu_\Phi$ is Gibbs there exists $P\in\mathbb R$ and $K>0$ so that
$$
\frac1K
    \leq \frac{\mu_\Phi([\iota_n(x)])}{e^{-P n +\varphi_n(x)} }
    \leq K
$$
for every $n\ge 1$ and every $x\in \Sigma$. In consequence, if $\psi_n(x)=\log \mu_\Phi([\iota_n(x)])$ then
\begin{align*}
\exp \psi_{m+n}(x) & =\mu([\iota_{m+n}(x)])
         \le K \; e^{-P (m+n) +\varphi_{m+n}(x)} \\
        % & = K \; e^{-P (m+n) +\varphi_{m+n}(x)} \\
         & \le K \; e^C \; e^{-P n +\varphi_{n}(x)} \; e^{-P m +\varphi_{m}(f^n(x))} \\
         & \leq K^3 \; e^C \; \exp \psi_n(x) \; \exp \psi_m(f^n(x))
\end{align*}
for every $n\ge 1$ and $x\in \Sigma$. Thus,
$\psi_{m+n}(x) \le \psi_{n}(x) + \psi_{m}(f^n(x)) +\tilde C$
with $\tilde C=C+ 3\log K$. Since the lower bound is completely analogous we deduce that $\Psi=\{\psi_n\}$
is almost additive and satisfies the bounded distortion condition since $\psi_n$ is constant on $n$-cylinders.
In particular these satisfy the hypothesis of Theorem~B in ~\cite{Va13} to deduce exponential large deviations.
In fact it is a simple computation to prove that if $\mu_\Phi$ is a weak Gibbs measure then the corresponding sequence of functions $\Psi$ as above are asymptotically additive, but we shall not prove or
use this fact here.
By \cite{ZZC11} either the convergence is uniform or the irregular
set has full topological pressure. In this case, since this set is
contained in the set of points for which
$$
\limsup_{n\to\infty} \Big|-\frac1n \log \mu_\Phi([\iota_n(x)]) - h_{\mu_\Phi}(f) \Big|>0
$$
this has also full topological pressure. From our Theorem~\ref{thm:B}, for any $c>0$ the set
of points so that
$$
\limsup_{n\to\infty} \Big|-\frac1n \log \mu_\Phi([\iota_n(x)]) - h_{\mu_\Phi}(f) \Big|>c
$$
has topological pressure strictly smaller than $\Ptop(f,\Phi)$.

\vspace{.3cm}
\subsection*{Acknowledgements.}
\textcolor{black}{The authors are deeply grateful to the anonymous referees for their comments and suggestions that helped to improve the manuscript.}
P.V. was supported by a fellowship by CNPq-Brazil and is grateful to Faculdade de Ci\^encias da Universidade do Porto for the excelent research conditions. P.V. is also grateful to Prof. Kenhoo Lee and Prof. Manseob Lee for the hospitality during the conference "Dynamical Systems and Related Topics" held in Daejeon, Korea where part of this work was develloped.

%%%%%%%%%%%%%%%%%%%%%%%%%%%%%%%%%%%%
\bibliographystyle{alpha}

\end{document}